\documentclass[a4paper,reqno,10pt]{amsart}
\textheight = 21cm
\textwidth = 13cm
\addtolength{\textwidth}{2cm} \addtolength{\hoffset}{-1cm}
\addtolength{\textheight}{2cm} \addtolength{\voffset}{-.5cm}
\usepackage{amssymb}
\usepackage{amsmath}
\usepackage{amsthm}
\usepackage{graphicx}
\usepackage{texdraw}
\usepackage{amsfonts}
\usepackage{hyperref}
\usepackage{caption}
\usepackage{color}
\usepackage{epsfig}
\usepackage{calligra}
\usepackage{mathtools}

\sloppy \allowdisplaybreaks \numberwithin{equation}{section}
\begingroup
\theoremstyle{plain}
\newtheorem{theorem}{Theorem}[section]
\newtheorem{proposition}[theorem]{Proposition}
\newtheorem{lemma}[theorem]{Lemma}

\theoremstyle{definition}
\newtheorem{definition}[theorem]{Definition}
\newtheorem{remark}[theorem]{Remark}
\newtheorem{example}[theorem]{Example}
\endgroup

\def \de {\mathrm{d}}
\def \e {\epsilon}

\def \Om {\Omega }

\def \A {\mathcal A}
\def \B {\Gamma}
\def \r {r_{_{2N+1}}}

\title[Body of constant width with minimal area in a given annulus]{Body of constant width with minimal area in a given annulus}

\author{A. Henrot, I. Lucardesi}

\begin{document}

\begin{abstract}
In this paper we address the following shape optimization problem: find the planar domain of least area, among the sets with prescribed constant width and inradius. In the literature, the problem is ascribed to Bonnesen, who proposed it in \cite{BF}. In the present work, we give a complete answer to the problem, providing an explicit characterization of optimal sets for every choice of width and inradius. These optimal sets are particular Reuleaux  polygons.
\end{abstract}

\maketitle

\medskip

Keywords: area minimization; constant width; inradius constraint; Reuleaux polygons

2010 MSC: 52A10, 
49Q10, 
49Q12, 
52A38. 

\section{Introduction}

Bodies of constant width (also named after L. Euler {\it orbiforms}) are fascinating geometric
objects and a huge amount of literature has been devoted to them. We refer to the recent book
\cite{MMO} for a nice presentation of the topic. The fact that many open problems for these objects
remain unsolved, in spite of their simple statement, is probably an element of their popularity.
Among known facts, the famous Blaschke-Lebesgue Theorem asserts that the Reuleaux triangle
minimizes the area among plane bodies of constant width, see \cite{Bla} for the proof of W. Blaschke or \cite{KM} for a more modern exposition and \cite{Le} for the original proof of H. Lebesgue
and \cite{BF} where this proof is reproduced. Let us mention that many
other proofs with very different flavours appeared later, for example \cite{Be}, \cite{CCG},
\cite{Eg}, \cite{Ga} and \cite{Ha}.

A related problem is the following.
For any planar compact set $K$, the set of points between the incircle (the biggest disk contained into $K$) and the circumcircle (the smallest disk containing $K$) is
called the minimal {annulus} associated with $K$  {(in higher dimension, the region between the insphere and circumsphere is called minimal shell).}
For a body of constant width $d$, it is known, see \cite{CG}, that the  {incircle} and the  {circumcircle} are centered at the
same point that we will choose as the origin in all the paper. Moreover, the inradius $r$ and the circumradius $R$
satisfy
\begin{equation}\label{inr}
r+R=d.
\end{equation}
Now, given an annulus $S$ with inner radius $r$ and outer radius $R$ satisfying $r+R=d$
with a fixed $d > 0$, it is natural to try to determine the bodies of
constant width $d$ having $S$ as their
associated minimal annulus and having either maximum or minimum volume.
A.E. Mayer in \cite{Ma1} has given upper
and lower bounds for the areas of plane sets of constant width with prescribed
minimal annulus. In particular Mayer's lower bound 
yields another proof of the Blaschke-Lebesgue
Theorem.  
The maximization problem has been solved by T. Bonnesen and the result is explained in the book
Bonnesen-Fenchel, see \cite{BF},  {pp. 134-135 in the original German edition and p. 143
in the English version}.
For the minimization problem,  {in the same chapter, T. Bonnesen gave a conjecture.
Our result confirms this conjecture and makes it more precise.} In a short paper
\cite{Ma2}, A.E. Mayer  {already} gave some sketch of proof which was not complete.
Let us quote Chakerian-Groemer whose Chapter on Bodies of constant width in the Encyclopedia
of Convexity, see \cite{CG}, is a well-known reference (see also \cite{MMO}):
{\it  "Mayer in \cite{Ma2} gives a sketch of a proof that the minimum
area, for a prescribed annulus, is attained by a certain Reuleaux-type polygon, as
conjectured by Bonnesen, however a detailed proof does not appear to have been
published."} 
This is the motivation of our paper: we wanted to give a correct, complete and modern proof
and describe completely the body of constant width that minimizes the area among bodies
having a given minimal annulus (i.e. bodies having a given inradius).
 {We recall that Reuleaux polygons are the plane convex bodies of constant width $d$ whose boundary consists of a finite (necessarily odd, see e.g. \cite[Section 8.1]{MMO}) number of arcs of circle of radius $d$; when the arcs have all the
same length, the polygons are said to be {\it regular} and, among them, the Reuleaux triangle is the one with 3 arcs (actually, it is the unique Reuleaux polygon with 3 boundary arcs).}

Therefore, in this paper we are concerned with the following problem: determine the optimal shape(s) of
\begin{equation}\label{mr}
\A(r):=\min \Big\{|\Omega|\ :\ \Omega \subset \mathbb R^2,\ \hbox{(convex) body of constant width $w(\Omega)=1$},\ \rho(\Omega)=r\Big\},
\end{equation}
where $\rho(\Omega)$ denotes the inradius. Here, without loss of generality, we have set the width $w$ to be 1 (clearly, for a generic width $t$, the minimum and the minimizers have to be rescaled by $t^2$ and $t$, respectively). Accordingly, the possible values of the inradius $\rho$ run in the closed interval $[1-1/\sqrt{3}, 1/2]$: 
 {the left endpoint, $1-1/\sqrt{3} \sim 0.422$, is the inradius of the Reuleaux triangle, which is well known to be the minimizer of the inradius among bodies of fixed constant width (see \cite{BF}
or \cite{CG}); as for the right endpoint, it is an easy consequence of \eqref{inr}.} 
For the extremal values of $r$, the minimizer is known: on one hand, for $r=1-1/\sqrt{3}$, the optimal shape is the Reuleaux triangle, from Blaschke-Lebesgue theorem; on the other hand, for $r=1/2$, it is clearly the disk of radius $1/2$ which is the only set in the corresponding annulus. 
For generic values of $r$, the existence is straightforward and follows by the direct method of the calculus of variations.

\begin{proposition}[Existence]\label{prop1}
Let $1-1/\sqrt{3}\leq r \leq 1/2$. Then the shape optimization problem $\A(r)$ has a solution.
\end{proposition}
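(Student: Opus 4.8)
The plan is to apply the direct method of the calculus of variations, the only delicate point being that the width and the inradius enter as \emph{equality} constraints. Before running the method I would check that the admissible class is nonempty for every $r\in[1-1/\sqrt{3},1/2]$. Denote by $\Omega_\triangle$ the Reuleaux triangle (constant width $1$, inradius $1-1/\sqrt{3}$) and by $D=B(0,1/2)$ the disk of radius $1/2$ (constant width $1$, inradius $1/2$), and consider the Minkowski combinations $\Omega_t:=(1-t)\,\Omega_\triangle\oplus t\,D$ for $t\in[0,1]$. These are convex, and since support functions add under Minkowski sum, the directional widths add as well; as both $\Omega_\triangle$ and $D$ have width $1$ in every direction, $\Omega_t$ has constant width $(1-t)\cdot 1+t\cdot 1=1$ for all $t$. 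Along this path $\rho$ varies continuously from $\rho(\Omega_0)=1-1/\sqrt{3}$ to $\rho(\Omega_1)=1/2$, so by the intermediate value theorem each admissible value $r$ is attained by some body of constant width $1$. Thus the constraint set is nonempty.

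Next I would take a minimizing sequence $(\Omega_n)$. Every body of constant width $1$ has diameter $1$, hence circumradius $R=1-r$ by \eqref{inr}; translating so that the common center of the incircle and circumcircle is the origin, all the $\Omega_n$ lie in the fixed compact set $\overline{B(0,1-r)}$. By the Blaschke selection theorem a subsequence (not relabelled) converges in the Hausdorff distance to a convex body $\Omega^\star$, equivalently the support functions $h_{\Omega_n}$ converge uniformly to $h_{\Omega^\star}$.

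It then remains to pass the constraints to the limit and to identify $|\Omega^\star|$ with $\A(r)$. On support functions the width constraint reads $h_{\Omega_n}(\theta)+h_{\Omega_n}(\theta+\pi)=1$ for all $\theta$; by uniform convergence this identity survives in the limit, giving $w(\Omega^\star)=1$. The inradius is Lipschitz for the Hausdorff distance on convex bodies, namely $|\rho(\Omega)-\rho(\Omega')|\leq d_{\mathcal H}(\Omega,\Omega')$, whence $\rho(\Omega^\star)=\lim_n\rho(\Omega_n)=r$, so $\Omega^\star$ is admissible. Finally, the area is continuous along Hausdorff-converging sequences of convex bodies (e.g.\ via the continuity of the support function together with the area formula), so $|\Omega^\star|=\lim_n|\Omega_n|=\A(r)$, and $\Omega^\star$ is a minimizer.

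The only genuinely delicate issue is that both constraints are equalities rather than inequalities: a priori, a Hausdorff limit of sets of inradius $r$ could fail to retain that value, and similarly for the width. The argument circumvents this precisely because, for convex bodies, both $\rho$ and $w$ are \emph{continuous} (not merely semicontinuous) for the Hausdorff distance, and the constant-width equality is rigidly encoded in the support function and hence stable under uniform convergence. Once these continuity facts are recorded, existence follows as a routine instance of the direct method, which is why the claim can be stated as straightforward.
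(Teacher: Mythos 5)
Your proof is correct and follows essentially the same route as the paper: take a minimizing sequence confined to a fixed compact set, apply the Blaschke selection theorem, and pass the width and inradius constraints and the area to the limit using their continuity under Hausdorff convergence of convex bodies (via uniform convergence of support functions). Your only addition is the preliminary nonemptiness check via Minkowski interpolation between the Reuleaux triangle and the disk together with the intermediate value theorem---a point the paper leaves implicit---which is a correct and worthwhile observation but does not alter the substance of the argument.
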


In this paper we give a complete answer to the problem \eqref{mr}, providing an explicit characterization of the minimizers for every $r$. 
Our construction gives, as a by-product, uniqueness  {among Reuleaux polygons}.

In order to state the main result, let us denote by $\r$, $N\in \mathbb N^*$,  {($N\geq 1$)}, the inradius of the regular Reuleaux $(2N+1)$-gon:
$$
\r=1-\frac{1}{2\cos\left(\frac{\pi}{2(2N+1)}\right)}.
$$
The sequence $\{\r\}_N$ is increasing and runs from $1-1/\sqrt{3}$ to $1/2$ (not attained).
\begin{theorem}[Characterization  {of the optimal Reuleaux polygon}]\label{thm} 
Let $1-1/\sqrt{3}\leq r < 1/2$.

If $r=\r$ for some $N\in \mathbb N^*$, then  {an} optimal set of $\A(r)$ is the regular Reuleaux $(2N+1)$-gon. In that case $\A(\r)=(2N+1)F(\r,0)$ where $F$ is the function defined in \eqref{ap1}.

If instead $r_{_{2N-1}}<r<r_{_{2N+1}}$ for some $N\in \mathbb N$, $N\geq 2$, 
setting
$$
\ell:=2 \arctan\left(\sqrt{4(1-r)^2-1}\right),\quad h:=\frac\pi2 - \frac{2N-1}{2}\, \ell,
$$
an optimal set of $\A(r)$ has the following structure:
\begin{itemize}
\item[i)] it is a Reuleaux polygon with $2N+1$ sides, all but one tangent to the incircle;
\item[ii)] the non tangent side has both endpoints on the outercircle and has length 
$$
a:=2\,\arcsin\left((1-r)\sin(h)\right),
$$ 
its two opposite sides have one endpoint on the outercircle and meet at a point in the interior of the annulus; moreover, they both have length 
$$
b:= h + \frac{\ell -a}{2};
$$
\item[iii)] the other $2N-2$ sides are tangent to the incircle, have both endpoints on the outercircle, and have length $\ell$.
\end{itemize}
Moreover, in that case $\A(r)=(2N-2)F(r,0)+F(r,h)$ (with $F$ defined in \eqref{ap1}).
\end{theorem}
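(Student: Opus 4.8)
The plan is to translate $\A(r)$ into a variational problem for the support function, reduce it to a shape of piecewise-constant curvature, and then solve a finite-dimensional optimization. Describing an admissible $\Omega$ through its support function $h(\theta)$ with respect to the common center $O$ of the in- and circumcircle, and setting $\rho=h+h''\ge0$ for the radius of curvature, the constant-width condition $h(\theta)+h(\theta+\pi)=1$ becomes $\rho(\theta)+\rho(\theta+\pi)=1$ (so $0\le\rho\le1$), the area is $|\Omega|=\tfrac12\int_0^{2\pi}h\rho\,d\theta$, and the inradius constraint $\rho(\Omega)=r$ reads $\min_\theta h=r$, i.e. $h\ge r$ with equality somewhere. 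Thus $\A(r)$ becomes a minimization over the convex set of admissible $\rho$.

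First I would show that a minimizer (which exists by Proposition \ref{prop1}) is a Reuleaux polygon. Writing $h=(\mathrm{Id}+\partial_\theta^2)^{-1}\rho$ modulo translations, and using that $\int_0^{2\pi}\rho\,d\theta=\pi$ is fixed by Barbier's theorem, a Fourier computation shows that on the admissible set the area is a \emph{concave} quadratic functional of $\rho$; hence its minimum is attained at an extreme point of the admissible set. I would then check that extreme points satisfy $\rho\in\{0,1\}$ almost everywhere — the alternation of vertices ($\rho=0$) and unit-radius arcs ($\rho=1$) that defines a Reuleaux polygon — while the state constraint $h\ge r$ is active only at isolated points. This last point is delicate: I must exclude a boundary arc lying along the incircle (an arc of radius $r<1$, i.e. $\rho\equiv r$ on a set of positive measure), which one rules out because it is neither extremal nor compatible with decreasing the area. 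The contact set with the incircle then consists precisely of the tangency points of the in-tangent arcs.

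Next I would run the finite-dimensional optimization. I encode a Reuleaux polygon with $2n+1$ arcs by the arc lengths $\beta_1,\dots,\beta_{2n+1}$ (with $\sum_i\beta_i=\pi$) and the distances $d_i=|OV_i|\le R=1-r$ of the arc centers $V_i$, and express $|\Omega|$ as a sum of elementary sector areas. A short computation gives that an arc touches the incircle exactly when $d_i=R$ and the radial foot lies on it, in which case its length is forced to $\ell=2\arctan\sqrt{4(1-r)^2-1}$ and its sector has area $F(r,0)$. Two structural facts then drive everything. The number of arcs is exactly $2N+1$: fewer arcs cannot realize $r>r_{2N-1}$, since the regular polygon maximizes the inradius for a given odd number of arcs, and I would show that more arcs never decrease the area. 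And the irregularity must \emph{concentrate} into a single defect: by an exchange argument (transporting two separated non-tangent arcs into one cluster strictly lowers the area), all but three consecutive arcs are in-tangent of equal length $\ell$.

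With the $2N-2$ in-tangent arcs of length $\ell$ fixed, I would solve for the three remaining arcs of the symmetric defect by imposing closure, constant width, and inradius $r$: the total-angle balance gives $h=\tfrac\pi2-\tfrac{2N-1}{2}\ell$; placing the endpoints of the central arc on the circumcircle gives its length $a=2\arcsin((1-r)\sin h)$; and the perimeter constraint gives $b=h+\tfrac{\ell-a}{2}$ for the two adjacent arcs, whose free endpoints meet at an interior vertex. Summing the sector areas yields $\A(r)=(2N-2)F(r,0)+F(r,h)$, while the boundary value $r=\r$ (all $2N+1$ arcs in-tangent, $h=\ell$) gives $\A(\r)=(2N+1)F(\r,0)$. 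I expect the main obstacle to be the combination of the reduction step and the concentration argument: proving that a single localized defect beats every spread-out configuration, and simultaneously pinning the number of arcs, is the genuine variational-combinatorial core, whereas the trigonometric identities for $\ell,a,b,h$ and the area are routine once this structure is in hand.
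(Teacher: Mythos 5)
Your route (support function, curvature $\rho=h+h''$, concavity of the area in the Fourier variables, minimization of a concave functional at extreme points of a convex set) is genuinely different from the paper's (Blaschke deformations inside the class of Reuleaux polygons, rigidity, then density of polygons), and the concavity claim itself is correct; but the reduction step on which everything rests has a real gap. First, the set you call convex is not: ``$h\ge r$ with equality somewhere'' is not stable under convex combinations, so you must relax to the convex constraint $h\ge r$ (inradius $\ge r$) and then argue a posteriori that the minimizer actually touches the incircle. Second, and more seriously, once the state constraint $h\ge r$ can be active on an interval, the extreme points of the relaxed set are \emph{not} only the bang-bang bodies with $\rho\in\{0,1\}$: a body whose boundary contains an arc of the incircle, i.e. $\rho\equiv r$ on an interval $I$ (hence $\rho\equiv 1-r$ on $I+\pi$, an arc of the circumcircle), with unit arcs and corners elsewhere, \emph{is} an extreme point. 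Indeed, for any two-sided admissible perturbation $\rho\pm\phi$, the induced variation $\psi$ of the support function must vanish identically on $I$ (because $h=r$ there and both signs must remain $\ge r$), whence $\phi=\psi+\psi''=0$ on $I$, then $\phi=0$ on $I+\pi$ by the width relation $\phi(\theta)+\phi(\theta+\pi)=0$, then $\phi\equiv 0$ because $\rho$ is bang-bang on the rest. So such a configuration admits \emph{no} admissible variation whatsoever: it is extreme, and its area cannot be decreased by any local deformation. Your proposed dismissal (``neither extremal nor compatible with decreasing the area'') is therefore untenable as stated; excluding these incircle-arc bodies requires a global comparison argument that the proposal does not contain.

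Moreover, even granting the exclusion of incircle arcs, the extreme-point reduction buys much less than your outline suggests: \emph{every} Reuleaux polygon whose boundary lies in the annulus is bang-bang, hence an extreme point of the relaxed set --- including, for instance, all regular Reuleaux polygons with inradius strictly larger than $r$, which your relaxation reintroduces as candidates. After this step you therefore still face the minimization over all Reuleaux polygons in the annulus, with no structural information, and the two ``structural facts'' you appeal to are exactly the hard core of the theorem: that allowing more arcs never helps, and that the irregularity concentrates into a single cluster while all remaining arcs are extremal. Your ``exchange argument'' is an assertion of this, not a proof. In the paper this content occupies Proposition \ref{propfund} (rigidity via Blaschke deformations, with a separate lemma on three consecutive arc lengths), the strict concavity of $h\mapsto F(r,h)$ (Proposition \ref{prop5}, a long and delicate estimate), and Step 3 of the proof of Theorem \ref{thm-density} (independence of the minimum from the number of clusters). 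Only the part of your plan ruling out fewer than $2N(r)+1$ arcs is genuinely easy, since each unit arc in the annulus has length at most $\ell(r)$ --- the same argument as the paper's. The trigonometric identities for $\ell$, $a$, $b$, $h$ are indeed routine, but the structure they rest on is the theorem itself, and that is the part your proposal leaves unproven.
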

 {\begin{remark}
To prove this result our strategy will consist in studying first this shape optimization problem
in the class of Reuleaux polygons and then, to use the density of Reuleaux polygons in the class of bodies of constant width, see e.g. \cite{BF}.
Our construction provides uniqueness of the minimizer {\it in the class of Reuleaux polygon}
and gives the minimal value of the area.
Now, it is not clear whether we have uniqueness in general. To prove that, we should
for example approximate any body of constant width by a sequence of Reuleaux polygons with increasing area lying in the same minimal annulus. 
\end{remark}}
To clarify this result, let us show some picture.

\begin{figure}[h]                                             
\begin{center}                                                
{\includegraphics[height=4truecm] {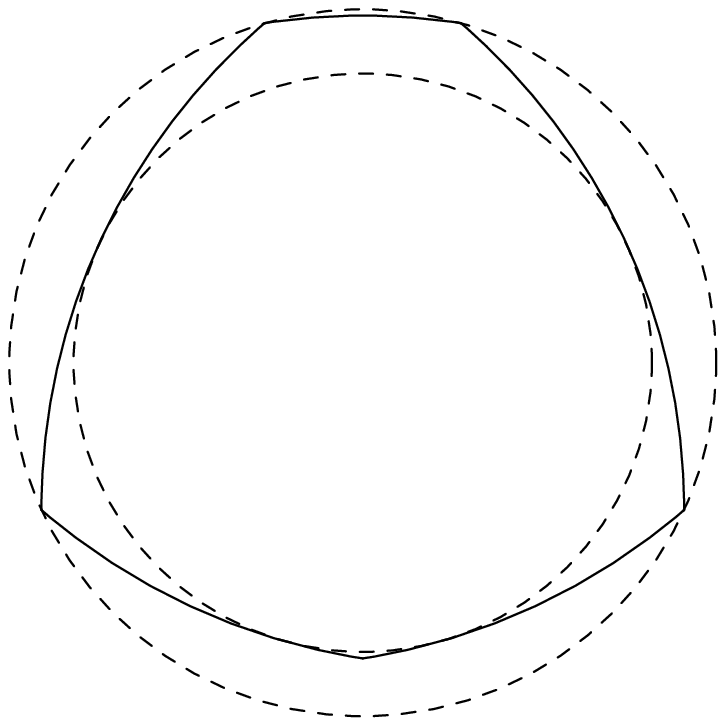}}\quad \quad                                                 
{\includegraphics[height=4truecm] {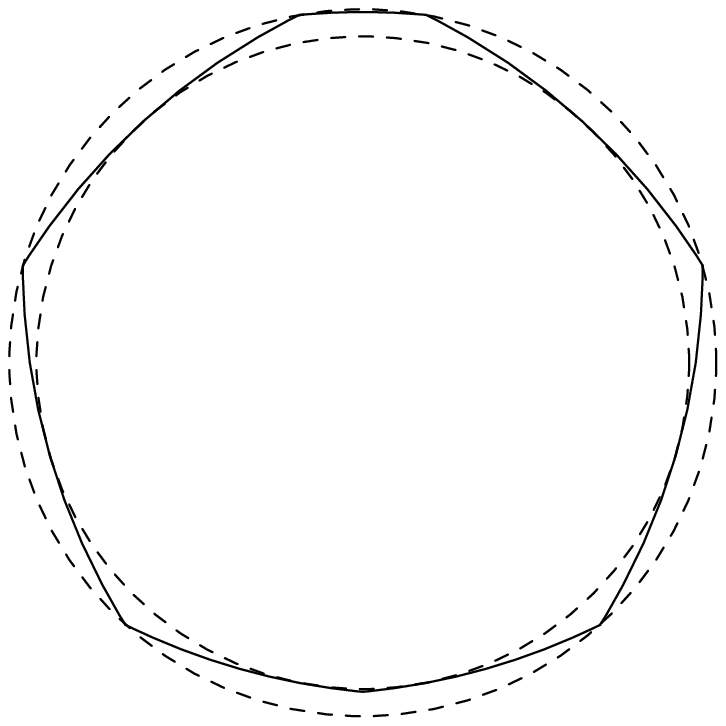}}\quad \quad                                                 
{\includegraphics[height=4truecm] {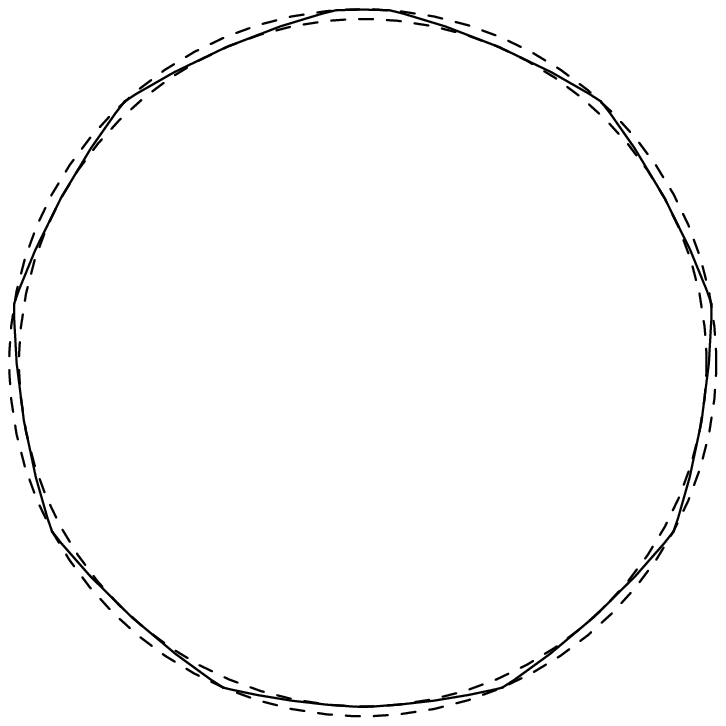}}                                               
\end{center}                                                  
\caption{{\it From left to right: optimal shapes for $r=0.45$, $0.48$, and $0.493$.}}\label{fig-os} 
\end{figure}

Notice that in the limit as $r\to \r$, the lengths $a$, $b$, and $\ell$ all converge to $2 \arctan\left(\sqrt{4(1-\r)^2-1}\right)$, which is the length of the sides of the regular Reuleaux $(2N+1)$-gon. Roughly speaking, in (ii), the interior (to the annulus) point gets closer and closer to the outercircle and the non tangent arc gets closer and closer to the incircle. More precisely, we show the following.
\begin{proposition}[Continuity]\label{prop2}
 {The map $r\mapsto \A(r)$ is continuous in $[1-1/\sqrt{3}, 1/2]$.\\
If we choose in $\mathrm{argmin}\,\A(r)$  the optimal Reuleaux polygon $\Omega(r)$ described in 
Theorem \ref{thm}, then the map $r\mapsto \Omega(r)$ is also continuous with respect to the Hausdsorff distance.}
\end{proposition}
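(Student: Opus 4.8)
The plan is to reduce the whole statement to the Hausdorff continuity of the selection $r\mapsto\Omega(r)$, and then obtain the continuity of $\A$ for free. Indeed, by Theorem \ref{thm} we have $\A(r)=|\Omega(r)|$, where $\Omega(r)$ is a \emph{convex} Reuleaux polygon (or, at the endpoint, the disk $B_{1/2}$), and on the class of planar convex bodies the area is continuous with respect to the Hausdorff distance. Hence, once we know $r\mapsto\Omega(r)$ is Hausdorff-continuous, the continuity of $r\mapsto\A(r)=|\Omega(r)|$ follows immediately. I would first fix a placement of the competitors (incircle and circumcircle centered at the origin, plus a rotation convention making the selection single-valued), and observe that $\Omega(r)$ is determined by finitely many circular arcs of radius $1$, each specified by its center (a vertex) and its two endpoints, all of which are explicit functions of the parameters $\ell(r)$, $h(r)$, $a(r)$, $b(r)$ of Theorem \ref{thm}.

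On each open interval $(r_{_{2N-1}},\r)$ the parameters $\ell,h,a,b$ are elementary, hence smooth, functions of $r$, and they stay away from the degenerate configurations, so the vertices and arc-centers of $\Omega(r)$ move continuously; since a circular-arc polygon whose centers and endpoints vary continuously varies continuously in the Hausdorff distance, $r\mapsto\Omega(r)$ is continuous (indeed smooth) on each such interval, and consequently so is $\A$, which there is given by the closed formula $\A(r)=(2N-2)F(r,0)+F(r,h)$ from \eqref{ap1}.

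The heart of the argument is the behaviour at the junction points $r=\r$, where the number of arcs changes. I would show that both one-sided limits of $\Omega$ equal the regular Reuleaux $(2N+1)$-gon, which is exactly $\Omega(\r)$. From the left (inside $(r_{_{2N-1}},\r)$, with $2N+1$ arcs) the computation recorded just after Theorem \ref{thm} gives $a,b,\ell\to \pi/(2N+1)$ and $h\to\ell$, so all arcs equalize and the tangency to the incircle is restored: the limit is the regular $(2N+1)$-gon. From the right (inside $(\r,r_{_{2N+3}})$, with $2N+3$ arcs) one has $\ell\to\pi/(2N+1)$ and $h\to0$, whence $a\to0$ and $b\to\ell/2$: the non-tangent arc collapses to a point, its two endpoints coalesce into a single vertex, and the two length-$b$ arcs, being centered precisely at these two coalescing endpoints, merge into one circular arc of total angle $\ell$; the limiting configuration is again the regular $(2N+1)$-gon. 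Thus $\Omega$ is Hausdorff-continuous at $\r$, and by continuity of the area, $\A(r)\to(2N+1)F(\r,0)=\A(\r)$. The left endpoint $r=1-1/\sqrt3=r_{1}$ is the $N=1$ case of this same right-hand analysis, the target body being the Reuleaux triangle.

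Finally, at the right endpoint $r=1/2$, where the junctions $\r$ accumulate as $N\to\infty$, I would avoid any monotonicity discussion and instead use the squeezing coming from the minimal annulus: by \eqref{inr} with $d=1$ one has $B_r\subseteq\Omega(r)\subseteq B_{1-r}$, and as $r\to1/2^-$ both $B_r$ and $B_{1-r}$ converge to $B_{1/2}$, forcing $\Omega(r)\to B_{1/2}=\Omega(1/2)$ in the Hausdorff distance and $\A(r)\to\pi/4=\A(1/2)$. The main obstacle I anticipate is precisely the right-hand limit at each junction: one must verify rigorously that the degenerate $(2N+3)$-gon converges to the regular $(2N+1)$-gon across the drop in the number of arcs, by tracking the coalescence of the two endpoints of the vanishing arc and confirming that the two adjacent length-$b$ arcs genuinely fuse into a single arc of the correct length; this also requires choosing the rotation of $\Omega(r)$ consistently on the two sides of $\r$ so that the two one-sided Hausdorff limits actually coincide.
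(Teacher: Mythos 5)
Your proposal is correct, and it runs the argument in the opposite direction from the paper. The paper proves the continuity of $\A$ first, purely from the closed formula: on $(r_{2N-1},r_{2N+1})$ it writes $\A(r)=(2N-2)F(r,0)+F(r,h(r))$ and computes the two one-sided limits at the junctions using exactly the degenerations you identify ($h(r)\to 0$ at the lower junction, and $h(r)\to\ell(r)$ with $F(r,\ell(r))\to 3F(r,0)$ at the upper one); the continuity of the shapes is then dispatched in two lines, by fixing an orientation convention (a vertex vertically above the origin for regular polygons, the cluster vertex $P_k$ vertically below it otherwise) and asserting that the vertices depend continuously on $r$. You instead establish Hausdorff continuity of the selection first --- and your junction analysis is the detailed version of what the paper leaves implicit: the coalescence of the endpoints $P_{k\pm1}$ of the vanishing arc and the fusion of the two length-$b$ arcs (centered precisely at those coalescing points) into a single extremal arc --- and then you get continuity of $\A$ for free, since area is Hausdorff-continuous on convex bodies. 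Your route buys two things: it actually proves the geometric statement at the junctions rather than asserting it, and it covers the endpoint $r=1/2$, where the junctions $r_{2N+1}$ accumulate and the paper's interval-by-interval argument stops; your squeeze $B_r\subseteq\Omega(r)\subseteq B_{1-r}$ supplies exactly the missing one-line argument there (it also gives $\A(r)\to\pi/4$ directly, without any selection). What the paper's route buys is that the continuity of $\A$ is obtained by pure computation, independent of any rotation convention or geometric limit. Finally, the rotation issue you flag is genuine: with the paper's literal convention the two one-sided limits of $\Omega$ at $r_{2N+1}$ differ by a rotation of angle $\pi/(2N+1)$ (from the left, $P_k$ tends to a vertex at the bottom of the outercircle; from the right, it tends to the midpoint of the bottom arc, i.e.\ the limit polygon has a vertex at the top). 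This is harmless because the paper explicitly interprets the continuity of $r\mapsto\Omega(r)$ ``up to rigid motion'', with freely chosen representatives in $\mathrm{argmin}\,\A(r)$; under that convention your two one-sided limits need only agree modulo a rotation, which your analysis already establishes. (One notational slip: the inradius of the Reuleaux triangle is $r_3$ in the paper's indexing, i.e.\ $N=1$ in $r_{2N+1}$, not $r_1$.)
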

The continuity of $r\mapsto  \Omega(r)$ has to be intended ``up to rigid motion'', namely for every $\epsilon>0$ there exists $\delta>0$ such that
$$
|r_1-r_2|< {\delta} \quad \Rightarrow \quad d_{\mathcal H}\left( \Omega(r_1);\Omega(r_2)\right)< {\epsilon},
$$
for some representative $\Omega(r_i)\in \mathrm{argmin}\,\A(r_i)$, where $d_{\mathcal H}(\,\cdot\,;\,\cdot\,)$ denotes the Hausdorff distance (see, e.g. \cite{H} for the definition).
 {\begin{remark}
The continuity of $r\mapsto \A(r)$ can be obtained either using the explicit formula of $\A(r)$
given above (see the end of the paper) or by a standard argument of $\Gamma$-convergence.
\end{remark}}

We conclude by pointing out that the scope of Theorem \ref{thm} is twofold: on one hand, it gives a complete answer to the Bonnesen's problem; on the other hand, providing a lower bound of the area in terms of geometric quantities, it might prove useful in other shape optimization problems.
 {We use it for example in \cite{HLu} to prove that the Reuleaux triangle maximizes
the Cheeger constant among bodies of constant width}.

\medskip

The plan of the paper is the following. The existence of minimizers (proof of Proposition \ref{prop1}) is given in the next section. As already announced above, in order to characterize the minimizers, we first restrict ourselves to the class of Reuleaux polygons. In this framework, optimal shapes are shown to satisfy an optimality condition, that we call \emph{rigidity} (see Section \ref{sec-rs}). We use as fundamental tool the so called \emph{Blaschke deformations} (see Section \ref{sec-prel}). In the last section we characterize the optimal rigid shapes (Theorem \ref{thm-density}) and we show that actually they are the minimizers of the original problem $\A$ (proof of Theorem \ref{thm}).  {This relies on some analytic argument: the key
point is the concavity property (Proposition \ref{prop5})
of the map $h\mapsto F(r,h)$ that is used to express the area of
each sector. This concavity allows to solve a maximization problem that gives the desired solution.}
The very end of the paper is devoted to the continuity statement (proof of Proposition \ref{prop2}).

\section{Preliminaries and Blaschke deformations}\label{sec-prel}
This section is devoted to some preliminary tools. In the first part, we give the precise definition of  {convex body}, \emph{width}, and \emph{inradius}, and we write the proof of Proposition \ref{prop1}. In the second part, we gather some facts on Reuleaux polygons: more precisely, we recall the notation and the family of deformations introduced by Blaschke in \cite{Bla}, 
see also \cite{KM} for more details, and we write the first order shape derivative with respect to these particular deformations.
 {\begin{definition}
 A {\it convex body} is a compact convex set with nonempty interior.
\end{definition}}
\begin{definition}
Given a  {compact connected} set $\Omega\subset \mathbb R^2$ and a direction $\nu \in \mathbb S^1$, we define the \emph{width} $w_\nu(\Omega)$ of $\Omega$ in direction $\nu$ as the minimal distance of two parallel lines orthogonal to $\nu$ enclosing $\Omega$. We say that $\Omega$ has \emph{constant width} if $w_\nu(\Omega)$ is constant for every choice of $\nu$. In this case, the width is simply denoted by $w(\Omega)$.
The \emph{inradius} of $\Omega$, denoted by $\rho(\Omega)$, is the largest $r$ for which an open disk of radius $r$ is contained into $\Omega$. We also recall the classical Barbier Theorem, see
\cite{CG}: the perimeter of any plane body of constant width $d$ is given by $P(\Om)=\pi d$.
\end{definition}

\medskip

\begin{proof}[Proof of Proposition \ref{prop1}]
By definition the admissible shapes are (strictly) convex and (up to translations) their boundary lie in the closed circular annulus $A:=\overline{B}_{1-r}(0)\setminus B_r(0)$.
If $\Omega_n$, $n\in \mathbb N$, is a minimizing sequence for $\A(r)$, we can extract a subsequence (not relabeled) which, by Blaschke selection theorem, converges for the Hausdorff distance to some convex set $\Omega^*$, whose boundary is in the annulus $A$. 
 {Since the width constraint and the inradius constraint are continuous with respect to the Hausdorff convergence of convex bodies (this is classical and follows from the uniform convergence
of the support functions that is equivalent to the Hausdorff convergence), we conclude that $\Omega^*$ is an admissible shape.} Finally, since the area is also continuous with respect to the Hausdorff convergence for convex domains, $\Omega^*$ is a minimizer for $\A(r)$, concluding the proof.
\end{proof}

\subsection{Reuleaux polygons and Blaschke deformations}

Reuleaux polygons form a particular subclass of constant width sets (here fixed equal to 1), whose boundary is made of an odd number of arcs of circle of radius 1.
The arcs are centered at boundary points, intersection of pairs of arcs. We call such centers \emph{vertexes} and we label them as $P_k$, $k=1,\ldots, 2N+1$, for a suitable $N\in \mathbb N^*$. The arc opposite to $P_k$ is denoted by $\B_k$ and is parametrized by
$$
\B_k=\{P_k + e^{it}\ :\ t\in [\alpha_k, \beta_k]\},
$$
for some pair of angles $\alpha_k$, $\beta_k$. Here, with a slight abuse of notation, $e^{it}$ stands for $(\cos t, \sin t)\in \mathbb R^2$. The subsequent and previous points of $P_k$ are
$$
P_{k+1}=P_k + e^{i\alpha_k}\quad \hbox{and}\quad P_{k-1}=P_{k} + e^{i\beta_k},
$$
respectively. Accordingly, the angles satisfy
$$
\beta_{k+1}= \alpha_k +\pi \,\quad \hbox{mod }2\pi.
$$

The concatenation of the parametrizations of the arcs provides a parametrization of the boundary of the Reuleaux polygon in counterclockwise sense: the order is $\B_{2N+1}$, $\B_{2N-1}$, $\ldots$, $\B_{1}$, $\B_{2N}$, $\B_{2N-2}$, $\ldots$, $\B_2$, namely first the arcs with odd label followed by the arcs with even label. 
Notice that the length of the arc $\B_k$ is $\beta_k-\alpha_k$ and since the perimeter of the Reuleaux polygon is $\pi$ by  {Barbier} Theorem, we have $\sum_k \beta_k - \alpha_k =\pi$.

\begin{remark}
To clarify the notation above, let us see the case of a Reuleaux pentagon. 

\begin{figure}[h]                                             
\begin{center}                                                
{\includegraphics[height=4.5truecm] {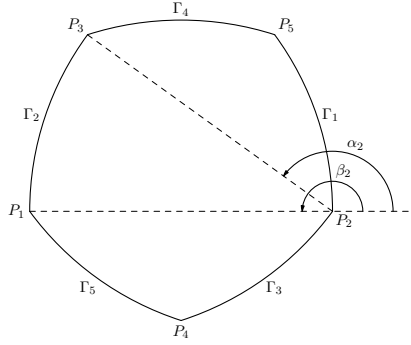}}                                                
\end{center}                                                  
\caption{{\it A Reuleaux pentagon (here, for simplicity, regular).}}\label{fig-penta} 
\end{figure}  

In Figure \ref{fig-penta}, we have chosen, without loss of generality, $\alpha_1=0$, namely the vertex $P_2$ aligned horizontally with $P_1$. Accordingly, the angles are ordered as follows
$$
0=\alpha_1 < \beta_1 < \alpha_4 < \beta_4 < \alpha_2 < \beta_2 < \alpha_5< \beta_5 < \alpha_3< \beta_3<2\pi
$$
and
$$
 \beta_2=\alpha_1+\pi\,,\ \alpha_5 = \beta_1+\pi\,,\  \beta_5=\alpha_4+\pi \,,\  \alpha_3= \beta_4+\pi\,,\  \beta_3=\alpha_2+\pi\,.
$$        
\end{remark}

We now introduce a family of deformations in the class of Reuleaux polygons of width 1, which allow to connect any pair of elements in a continuous way (with respect to the Hausdorff distance), staying in the class. This definition has been introduced by W. Blaschke in \cite{Bla} and analysed by Kupitz-Martini in \cite{KM}.

\begin{definition}\label{def-BD}
Let $\Omega$ be a Reuleaux polygon with $2N+1$ arcs. Let $k$ be one of the indexes in $\{1,\ldots, 2N+1\}$. A \emph{Blaschke deformation} acts moving the point $P_{k}$ on the arc $\B_{k-1}$ increasing or decreasing the arc length. Consequently, the point $P_{k+1}$ moves and the arcs $\B_{k}$, $\B_{k+1}$, and $\B_{k+2}$ are deformed, as in Fig. \ref{fig-def},  {so that the resulting shape is still a Reuleaux polygon.} We say that a Blaschke deformation is \emph{small} if the arc length of $\B_{k-1}$ has changed of $\e\in \mathbb R$,  {infinitesimal parameter. In that case, the number of arcs remains constant.}
\end{definition}

\begin{figure}[h]                                             
\begin{center}                                                
{\includegraphics[height=5.5truecm] {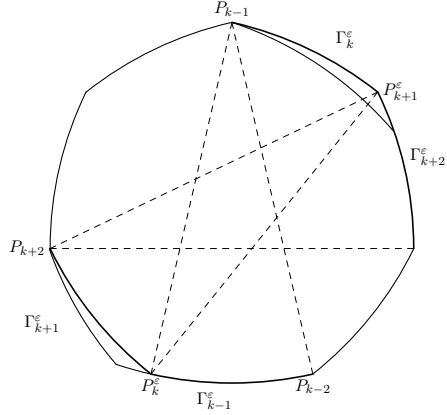}}                                                
\end{center}                                                  
\caption{{\it A Blaschke deformation of a Reuleaux heptagon which moves $P_k$ on $\B_{k-1}$ changing $\alpha_{k-1}$ into $\alpha_{k-1}^\e:=\alpha_{k-1}+\e$, with $\e>0$ small.}}\label{fig-def} 
\end{figure}                

Let us consider a small Blaschke deformation acting on $P_k$ as in Definition \ref{def-BD}, for some small $\e\in \mathbb R$. Let us denote by $\B_i^\e$, $P_i^\e$, $\alpha_i^\e$, and $\beta_i^\e$ the deformed arcs, vertexes, and angles. By definition,
\begin{equation}\label{expa1}
\alpha_{k-1}^\e= \alpha_{k-1}+\e,\quad \beta_k^{\e} = \beta_k + \e.
\end{equation}
The dependence on $\e$ of the other angles is less evident. However, it can be derived by imposing that the transformed configuration is a Reuleaux polygon. Let us determine the first order expansion in $\e$. 
The angles $\alpha_k^\e$, $\beta_{k+1}^\e$, and $\beta_{k+2}^\e$  are of the form
\begin{equation}\label{expa2}
\begin{cases}
& \alpha_k^\e = \alpha_k  +  \e \tau +  {o(|\e|)}, \quad \beta_{k+1}^\e = \beta_{k+1} + \e \tau + o(|\e|),
\\
& \alpha_{k+1}^\e = \alpha_{k+1} +  \e \sigma + o(|\e|),  \quad \beta_{k+2}^\e = \beta_{k+2} + \e \sigma + o(|\e|),
\end{cases}
\end{equation}
for some $\sigma$, $\tau\in \mathbb R$. The coefficients $\sigma$ and $\tau$ are uniquely determined by the relation
$$
P_{k+1}^\e = P_k^\e + e^{i \alpha_{k}^\e}=P_{k+2} + e^{i \beta_{k+2}^\e},
$$
which, using the expansions \eqref{expa1} and \eqref{expa2}, easily leads to
\begin{align}
&  e^{i \alpha_{k-1}} + \tau e^{i\alpha_k} = \sigma e^{i\beta_{k+2}}\quad \Longleftrightarrow \quad    e^{i \alpha_{k-1}} - \tau e^{i\beta_{k+1}}  = - \sigma e^{i\alpha_{k+1}} \notag
\\
& \Longleftrightarrow \quad  e^{i (\alpha_{k-1} - \alpha_{k+1})} - \tau e^{i(\beta_{k+1} - \alpha_{k+1})}=-\sigma\notag
\\
& \Longleftrightarrow \quad 
\begin{cases}\label{ab}
\sigma= \sin(\beta_k - \alpha_k) / \sin(\beta_{k+1} - \alpha_{k+1})
\\ \tau= \sin(\alpha_{k-1} - \alpha_{k+1}) / \sin(\beta_{k+1} - \alpha_{k+1}).
\end{cases}
\end{align}

\subsection{Shape derivatives with respect to Blaschke deformations}
In this paragraph we compute the first order shape derivative of the area at a Reuleaux polygon, with respect to a small Blaschke deformation. We recall that, given a one parameter family of small deformations $\Omega_\e$ of $\Omega$, the first order shape derivative of the area at $\Omega$ is nothing but the derivative with respect to $\e$ of the map
$
\e \mapsto |\Omega_\e|
$
evaluated at $\e=0$, namely the limit
$$
\lim_{\e \to 0} \frac{|\Omega_\e|-|\Omega|}{\e}.
$$
 {Note that} in the computation of the first order shape derivative  {the terms} of order $o(\e)$  {in $|\Omega_\e|$} do not play any role.

\begin{proposition}\label{prop4}
Let $\Omega$ be a Reuleaux polygon with angles $\alpha_i$ and $\beta_i$, $i=1,\ldots, 2N+1$.
The first order shape derivative of the area at $\Omega$ with respect to a small Blaschke deformation acting on the point $P_k$ is
$$
\de A_B:=
1-\cos(\beta_k-\alpha_k) - \frac{\sin(\beta_k-\alpha_k)}{\sin(\beta_{k+1}-\alpha_{k+1})}\, \big(1- \cos(\beta_{k+1} - \alpha_{k+1})\big).
$$
that can also be written introducing the lengths $j_k=\beta_k-\alpha_k$ and $j_{k+1}=\beta_{k+1}
-\alpha_{k+1}$ of the arcs 
$\Gamma_k$ and $\Gamma_{k+1}$:
$$
\de A_B=2 \frac{\sin(j_k/2)}{\cos(j_{k+1}/2)}\,\sin\left(\frac{j_k-j_{k+1}}{2}\right).
$$
In particular, the area decreases under a Blaschke deformation if
\begin{itemize}
\item $P_k$ moves on $\Gamma_{k-1}$ in the direct sense ($\e>0$) {\bf and} $j_k<j_{k+1}$,
\item $P_k$ moves on $\Gamma_{k-1}$ in the indirect sense ($\e<0$) {\bf and} $j_k>j_{k+1}$.
\end{itemize}
Moreover, the case where $j_k=j_{k+1}$ corresponds to a local maximum of the area and the area decreases when $P_k$ moves on $\Gamma_{k-1}$ in both senses.
\end{proposition}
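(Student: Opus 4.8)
The plan is to compute the area by Green's theorem, write it as a sum of per-arc contributions, and then differentiate only the finitely many terms affected by the deformation. Parametrising each arc $\Gamma_m$ as $t\mapsto P_m+e^{it}$, $t\in[\alpha_m,\beta_m]$, and using $\tfrac12\oint_{\partial\Omega}(x\,\de y-y\,\de x)$ with counterclockwise orientation, each arc contributes $\tfrac12\big(P_m\times(P_{m-1}-P_{m+1})+(\beta_m-\alpha_m)\big)$; here I identify $\mathbb R^2\cong\mathbb C$, write $u\times v=\mathrm{Im}(\bar u\,v)$, and use $e^{i\beta_m}=P_{m-1}-P_m$ and $e^{i\alpha_m}=P_{m+1}-P_m$. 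Summing over $m$ and reindexing the cyclic sum gives $|\Omega|=\sum_m P_m\times P_{m-1}+\pi/2$. Two remarks make the derivative easy: the term $\pi/2$ is constant (Barbier), hence drops out; and under the deformation at $P_k$ only the vertices $P_k$ and $P_{k+1}$ move, so only the three products $P_k\times P_{k-1}$, $P_{k+1}\times P_k$, $P_{k+2}\times P_{k+1}$ depend on $\e$.

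Next I would compute the velocities: from \eqref{expa1} one finds $\tfrac{\de}{\de\e}\big|_{0}P_k^\e=ie^{i\alpha_{k-1}}$ (since $P_{k-1}$ stays fixed), while \eqref{expa2} and \eqref{ab} together with $P_{k+1}^\e=P_{k+2}+e^{i\beta_{k+2}^\e}$ give $\tfrac{\de}{\de\e}\big|_0 P_{k+1}^\e=i\sigma e^{i\beta_{k+2}}$. Differentiating the three products and placing the origin at $P_k$ (allowed, since the area derivative is translation invariant) leaves three terms, each a single cosine of a difference of angles. The decisive simplification is the relation $\beta_{m+1}=\alpha_m+\pi$: it makes $\beta_k-\alpha_{k-1}=\pi$, turning the first term into $+1$; it makes $\alpha_k-\alpha_{k-1}=\pi-j_k$, turning the second into $-\cos j_k$; and it collapses the two pieces of the $P_{k+2}$-term into $-\sigma(1-\cos j_{k+1})$. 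Inserting $\sigma=\sin j_k/\sin j_{k+1}$ from \eqref{ab} produces exactly the first formula for $\de A_B$. The factorised form then follows from $1-\cos\theta=2\sin^2(\theta/2)$ and $\sin\theta=2\sin(\theta/2)\cos(\theta/2)$, after factoring out $2\sin(j_k/2)/\cos(j_{k+1}/2)$ and recognising a sine of a difference.

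For the sign statement, every arc of a Reuleaux polygon has length in $(0,\pi)$, so $\sin(j_k/2)>0$ and $\cos(j_{k+1}/2)>0$; hence $\de A_B$ has the sign of $\sin\big((j_k-j_{k+1})/2\big)$, i.e. of $j_k-j_{k+1}$. As the first-order area change is $\de A_B\,\e$, the two bullet points follow at once.

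For the local maximum at $j_k=j_{k+1}=:j$, the same formula applied to the deformed polygon shows that $\tfrac{\de}{\de\e}|\Omega_\e|$ has the sign of $g(\e):=j_k(\e)-j_{k+1}(\e)$ for all small $\e$, with $g(0)=0$. From \eqref{expa2} one has $\tfrac{\de}{\de\e}j_k=1-\tau$ and $\tfrac{\de}{\de\e}j_{k+1}=\tau-\sigma$, so $g'(0)=1-2\tau+\sigma$; at the symmetric configuration $\sigma=1$ and $\tau=2\cos j$, giving $g'(0)=2(1-2\cos j)$ and, equivalently, $\tfrac{\de^2}{\de\e^2}|\Omega_\e|\big|_0=\tan(j/2)\,g'(0)$. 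I expect this last sign to be the main obstacle: one must check $g'(0)<0$, i.e. $j<\pi/3$, so that $g$ changes sign from positive to negative and $\e=0$ is a strict local maximum, with the area decreasing in both senses. Everything else is bookkeeping; the one genuine subtlety is the systematic use of $\beta_{m+1}=\alpha_m+\pi$ to collapse the position-dependent cross-products into functions of the arc lengths alone.
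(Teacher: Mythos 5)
Your derivation of the two formulas for $\de A_B$ is correct, but it follows a genuinely different route from the paper's. The paper invokes the Hadamard shape-derivative formula: it writes the derivative as $\int_{\partial \Omega} V\cdot n\,\de \mathcal H^1$, observes that only $\Gamma_k$ and $\Gamma_{k+1}$ are deformed non-tangentially, constructs the velocity field $V$ on these two arcs from the expansions \eqref{expa1}--\eqref{expa2} (with the interpolating coefficients $C_k$, $C_{k+1}$), and integrates $V\cdot n=\sin(t-\alpha_{k-1})$ on $\Gamma_k$ and $V\cdot n=-\sigma\sin(t-\alpha_{k+1})$ on $\Gamma_{k+1}$. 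You instead compute the area exactly by Green's theorem, collapse it to the discrete expression $|\Omega|=\sum_m P_m\times P_{m-1}+\pi/2$ (the $\pi/2$ being constant by Barbier), and differentiate only the three cross products containing the moving vertices $P_k$, $P_{k+1}$, using nothing but the vertex velocities $\dot P_k=ie^{i\alpha_{k-1}}$ and $\dot P_{k+1}=i\sigma e^{i\beta_{k+2}}$. I checked the per-arc Green contribution, the cyclic reindexing, the three terms $+1$, $-\cos j_k$, $-\sigma(1-\cos j_{k+1})$ obtained via $\beta_{m+1}=\alpha_m+\pi$, and the trigonometric factorization: all correct, and the sign bullets then follow as in the paper since every arc has length in $(0,\pi)$. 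Your route is more elementary and self-contained: it requires neither the shape-derivative theory cited in \cite{HP} nor any parametrization of the deformed arcs, and the exact area formula makes the $o(\epsilon)$ bookkeeping transparent; nothing is lost in exchange.

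The one incomplete point is the final claim, that $j_k=j_{k+1}=:j$ gives a local maximum. Your reduction is actually sharper than the paper's: $\tfrac{\de}{\de\epsilon}j_k=1-\tau$, $\tfrac{\de}{\de\epsilon}j_{k+1}=\tau-\sigma$, with $\sigma=1$ and $\tau=\sin(2j)/\sin j=2\cos j$ at the symmetric configuration, so everything hinges on $g'(0)=2(1-2\cos j)<0$, i.e.\ on the inequality $j<\pi/3$, which you flag as ``the main obstacle'' but do not prove. Be aware that the paper leaves exactly the same gap: its sentence ``since $j_{k+1}$ increases and $j_k$ decreases'' is precisely the unproved assertion $\tau>1$, i.e.\ $j<\pi/3$. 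So your proposal is no less complete than the published proof; it merely isolates the missing lemma. That lemma is true and can be proved from the closure condition of the vertex polygon: the unit edge vectors $u_m:=P_{m+1}-P_m$ satisfy $\sum_m u_m=0$ and $\arg u_m-\arg u_{m-1}=\pi-j_m$; normalizing $\arg u_k=0$ gives $u_{k-1}+u_k+u_{k+1}=1-2\cos j$, so by closure the remaining $2N-2$ vectors sum to $2\cos j-1$; on the other hand these vectors are $(-1)^{p+1}e^{-it_p}$ with $j<t_1<\cdots<t_{2N-2}<\pi-j$, so the real part of their sum is $(\cos t_1-\cos t_2)+\cdots+(\cos t_{2N-3}-\cos t_{2N-2})>0$, cosine being decreasing on $(0,\pi)$; hence $2\cos j-1>0$, i.e.\ $j<\pi/3$, whenever $N\geq 2$. (For $N=1$ the remaining set of vectors is empty, $j=\pi/3$, the polygon is the Reuleaux triangle, and the Blaschke deformation is a rigid rotation, so the strict ``decreases in both senses'' must be understood as excluding this trivial case.) With this lemma, either your first-derivative sign argument or your formula $\tfrac{\de^2}{\de\epsilon^2}|\Omega_\epsilon|\big|_{0}=\tan(j/2)\,g'(0)<0$ finishes the proof.
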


\begin{proof}
It is well known (see, e.g. \cite{HP}) that the first order shape derivative of the area at a Lipschitz domain $\Omega$ is a boundary integral which only depends on the normal component of the deformation.
More precisely,  {if $\Phi(\e,\cdot):\mathbb R^2 \to \mathbb R^2$ is a family of diffeomorphisms which map $\Omega$ into $\Omega_\e$, such that $\Phi(0,\cdot)$ is the identity, and such that $\e \mapsto \Phi(\e, \cdot)$ is differentiable at $0$, the first order shape derivative of the area} reads
\begin{equation}\label{boundaryint}
\int_{\partial \Omega} V\cdot n \, \de \mathcal H^1,
\end{equation}
where  {$V(x):=\frac{\partial}{\partial \e}\Phi(\e, x)\lfloor_{\e=0}$. Roughly speaking, for every point $x\in \mathbb R^2$ we have $\Phi(\e,x)=x + \e V(x) + o(|\e|)$. In view of \eqref{boundaryint}, we need to determine the action of $\Phi(\e,\cdot)$ on boundary arcs: for} the Blaschke deformation under study,  {only the arcs} $\B_k$ and $\B_{k+1}$ (see Definition \ref{def-BD})  {are deformed non tangentially, thus in the computation of \eqref{boundaryint} we may disregard all the other arcs.} Using the parametrization $[\alpha_j, \beta_j]\ni t\mapsto P_j + e^{it}$ of $\B_j$, $j=k, k+1$, and noticing that the outer normal vector is $e^{it}$, we immediately have the following simplification:
\begin{equation}\label{shapeder}
\int_{\partial \Omega} V\cdot n \, \de \mathcal H^1 = \int_{\B_k} V \cdot n  \, \de \mathcal H^1 + \int_{\B_{k+1}}V\cdot n \, \de \mathcal H^1  = \int_{\alpha_k}^{\beta_k} V(t) \cdot e^{it} \de t+ \int_{\alpha_{k+1}}^{\beta_{k+1}} V(t)\cdot e^{it}\de t,
\end{equation}
where, for brevity, we have denoted by $V(t)$ the vector $V(P_j + e^{it})$ on the arc $\B_j$, $j=k, k+1$.

 {In order to determine $V$, let us write the $\Phi(\e,\cdot)$ on $\B_{k}$ and $\B_{k+1}$. The arc $\B_{k}^\e$ can be parametrized as follows:
$$
[\alpha_k^\e,\beta_k^\e] \ni t \mapsto P_k^\e + e^{it}
$$
}
or, equivalently, recalling the expansions \eqref{expa1} and \eqref{expa2} of the angles $\alpha_k^\e$ and $\beta_k^\e$, as
 {
$$
[\alpha_k,\beta_k] \ni t \mapsto P_k   + e^{it} + \e ( i e^{i\alpha_{k-1}} + i C_k(t)e^{it}) + o(\e),
$$
}
with
$$
C_k(t)\coloneqq \tau+ (1-\tau)(t-\alpha_k)/(\beta_k-\alpha_k),
$$
and $\tau$ defined in \eqref{ab}. 

Therefore, $V$ acts on the arc $\B_k$ as $V(t)=  i e^{i\alpha_{k-1}} + i C_k(t)e^{it}$. In particular, 
\begin{equation}\label{vn1}
V(t)\cdot e^{it} = \sin(t-\alpha_{k-1}) \quad  \hbox{on }\B_{k}.
\end{equation}
Similarly, using the expansions in \eqref{expa2} of $\alpha_{k+1}^\e$ and $\beta_{k+1}^\e$, and recalling the definition of $\sigma$ in \eqref{ab}, we infer that the arc $\B_{k+1}$ is transformed into $\B_{k+1}^\e$  {parametrized by}
$$
 [\alpha_{k+1}^\e,\beta_{k+1}^\e] \ni t \mapsto  P_{k+1}^\e + e^{it},
$$
 {or equivalently, by}
$$ 
 [\alpha_{k+1},\beta_{k+1}]  \ni t \mapsto P_ {k+1} + e^{it} + \e (i \sigma e^{i \beta_{k+2}} + i C_{k+1}(t)e^{it})
$$
with
$$
C_{k+1}(t)= \sigma + (\tau-\sigma)(t-\alpha_{k+1})/(\beta_{k+1}-\alpha_{k+1}).
$$
Thus, recalling that $\beta_{k+2}=\alpha_{k+1}+\pi$ modulo $2\pi$,
\begin{equation}\label{vn2}
V(t)\cdot e^{it} =-\sigma \sin (t-\alpha_{k+1})\quad  \hbox{on }\B_{k+1}.
\end{equation}

Inserting \eqref{vn1} and \eqref{vn2} into \eqref{shapeder}, developing the integral, we get the first formula. The second follows using elementary trigonometry. The conclusion, in
the case of equality of the lengths, comes from the fact that the derivative becomes negative
in the direct sense when we perform the Blaschke deformation (since $j_{k+1}$ increases
and $j_k$ decreases) and vice-versa.
\end{proof}

\begin{remark} For any non regular Reuleaux polygon, we observe that
we can always choose a Blaschke deformation such that the first derivative of the area
is negative, making the area decrease. This is precisely
the idea used by W. Blaschke in his proof of the Blaschke-Lebesgue Theorem. We can also
make the area increase (for a non regular Reuleaux polygon), which implies the Firey-Sallee
Theorem asserting that the regular Reuleaux polygons maximize the area among Reuleaux polygons
with a fixed number of sides, see \cite{CG}, \cite{KM}.
\end{remark}

\section{Rigid shapes}\label{sec-rs}
We have seen that a Blaschke deformation allows to make the area decrease. Therefore,
for our minimization problem we can concentrate on sets for which no such Blaschke 
deformation is
permitted (because any Blaschke deformation would violate the annulus constraint). This is the sense of the next definition.

\begin{definition}\label{defrigid} Let $r$ be fixed.
We say that a Reuleaux polygon is \emph{rigid} if no Blaschke deformation that
decreases the area can be performed keeping the inradius constraint satisfied. For brevity, since we are searching for minimizers in the class of Reuleaux polygons with width 1 and inradius $r$, we will refer to these particular objects simply as \emph{rigid shapes} or \emph{rigid configurations}.
\end{definition}

A Blaschke deformation is impossible in our class of sets if it moves an arc
inside the incircle (or outside the outercircle) violating the constraint of minimal annulus.
This is why we introduce the following definitions that describe the only possible arcs
such that no deformation is possible.

\begin{definition}
Let be given a Reuleaux polygon of width 1 and inradius $r$. We say that one arc of its boundary is \emph{extremal} if it is tangent to the incircle and both endpoints are on the outercircle.
\end{definition}

\begin{definition}\label{def-clu} Let be given a Reuleaux polygon of width 1 and inradius $r$. We say that three arcs $\B_{k-1}$, $\B_k$, and $\B_{k+1}$ of the boundary form a \emph{cluster} if:\\
the arcs $\B_{k-1}$ and $\B_{k+1}$ are tangent to the incircle, their common point $P_k$ lies in the interior of the annulus $B_{1-r}(0)\setminus \overline{B}_r(0)$, and the other endpoints $P_{k\pm2}$ are on the outercircle.\\ 

Furthermore, we define the \emph{characteristic parameter} $h$ as half of the angle $P_{k+1}\widehat{O}P_{k-1}$. 
These definitions are summarized in Fig. \ref{fig-cluster}.
\end{definition}

\begin{figure}[h]                                             
\begin{center}
{\includegraphics[height=4.5truecm]{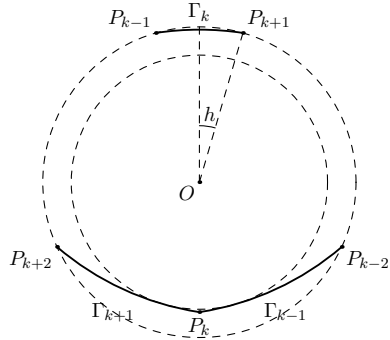}}
\end{center}                                                  
\caption{{\it A triple of arcs $(\B_{k-1}, \B_k, \B_{k+1})$ forming a cluster and the characteristic parameter $h$.}}\label{fig-cluster} 
\end{figure}  

\begin{remark}\label{remark} In the definition of cluster, the arc $\Gamma_k$ can be arbitrarily close to the empty set or to an extremal arc. In the first limit case, we have that $\B_{k-1}$ and $\B_{k+1}$ form a unique arc tangent to the incircle, namely an extremal arc. In the second limit case, $\B_{k-1}$ and $\B_{k+1}$ are a pair of extremal arcs. All in all, extremal arcs (counted individually or in suitable groups of three) can be seen as particular cases of clusters.
At last, let us remark that any cluster has an axis of symmetry:  {by construction, it is the line connecting $P_k$ with the midpoint of the opposite arc $\B_k$ (see also Fig. \ref{fig-cluster}).}
\end{remark}

The fundamental proposition in our approach is the following,  {in which we give a characterization of rigid shapes}. It shows that we can restrict
the study of optimal shapes to Reuleaux polygons having only extremal arcs and clusters.
\begin{proposition}\label{propfund}
The boundary of a rigid shape is made of a finite number (possibly zero) of clusters and of extremal arcs.
\end{proposition}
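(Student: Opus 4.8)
The plan is to read off the structure of a rigid shape directly from the area–derivative formula of Proposition~\ref{prop4}, after interpreting the rigidity constraint geometrically. Normalise so that the incircle and circumcircle are centred at the origin $O$, so that every vertex lies in the closed annulus, $r\le |OP_k|\le 1-r$. The basic observation is the equivalence
$$
P_k\in\partial B_{1-r}\quad\Longleftrightarrow\quad \Gamma_k\ \text{is tangent to the incircle},
$$
which holds because $\Gamma_k$ is an arc of the unit circle centred at $P_k$, whose minimal distance to $O$ is $1-|OP_k|$ (the farthest point from $O$ on $\Gamma_k$ being an endpoint, i.e. a vertex); thus $\Gamma_k$ touches $\partial B_r$ exactly when $|OP_k|=1-r$. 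In these terms an arc is \emph{extremal} iff its centre and its two endpoints all lie on the outer circle, and a \emph{cluster} is precisely the configuration in which an interior vertex $P_k$ (the tip) is the common endpoint of two arcs $\Gamma_{k-1},\Gamma_{k+1}$ whose centres $P_{k-1},P_{k+1}$ and far endpoints $P_{k\pm2}$ all sit on $\partial B_{1-r}$.

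Next I would determine, for each index $k$, when the area--decreasing Blaschke deformation at $P_k$ is forbidden by the annulus. By Proposition~\ref{prop4} such a deformation exists whenever $j_k\neq j_{k+1}$, in the sign dictated by the two bullet conditions, while for $j_k=j_{k+1}$ \emph{both} directions decrease the area; rigidity therefore requires each of these motions to be inadmissible. Since a small Blaschke deformation at $P_k$ moves only the two vertices $P_k$ and $P_{k+1}$ (along $\Gamma_{k-1}$ and along $\Gamma_{k+2}$ respectively) and reshapes only $\Gamma_k,\Gamma_{k+1}$, the only way to violate the constraint at first order is to push one of $P_k,P_{k+1}$ across the outer circle, equivalently to push $\Gamma_k$ or $\Gamma_{k+1}$ inside the incircle. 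Differentiating the expansions \eqref{expa1}--\eqref{expa2} gives, for the radial motion of the moving vertices, an expression of the form
$$
\frac{\de}{\de\e}\,|OP_k|^2\Big|_{\e=0}=2\,|OP_{k-1}|\,\sin(\theta_{k-1}-\alpha_{k-1}),\qquad \theta_{k-1}:=\arg P_{k-1},
$$
and a similar one for $P_{k+1}$; these pin down, in each case, which of the two vertices must already lie on $\partial B_{1-r}$ for the area--decreasing direction to be genuinely blocked.

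From this local dichotomy the global structure follows. Call an arc \emph{tangent} if its centre–vertex lies on $\partial B_{1-r}$. If $\Gamma_m$ is \emph{not} tangent, then $P_m$ is interior, so the outer constraint cannot obstruct motions of $P_m$ itself; rigidity of the two deformations that move $P_m$—those acting at $P_{m-1}$ and at $P_m$—then forces their \emph{other} moving vertices, namely $P_{m-1}$ and $P_{m+1}$, onto the outer circle, and a continuation of the same blocking argument drives the far endpoints $P_{m\pm2}$ onto $\partial B_{1-r}$ as well. Hence every non--tangent arc is the base of a cluster; moreover, by the same analysis, two vertices $P_m,P_{m+1}$ of consecutive index cannot both be interior (otherwise the deformation at $P_m$ moves two unconstrained vertices and decreases the area unobstructed), so non--tangent arcs are isolated. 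Consequently the boundary splits into maximal strings of tangent arcs with all endpoints on the outer circle—that is, extremal arcs—separated by isolated interior tips, each carrying a cluster; recalling Remark~\ref{remark}, that extremal arcs are degenerate clusters, this is exactly the asserted decomposition.

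The main obstacle is the geometric bookkeeping behind the second paragraph: the sign of the radial displacement depends on the configuration through the angle $\theta_{k-1}-\alpha_{k-1}$, so one must check case by case that the area--\emph{decreasing} direction singled out by Proposition~\ref{prop4} is precisely the one obstructed by a vertex already lodged on the outer circle, rather than an unconstrained one. Particular care is needed for the boundary cases—the local--maximum case $j_k=j_{k+1}$, where \emph{both} directions must be blocked simultaneously; the transitions between tangent and non--tangent runs, including ruling out two adjacent tips sharing a single outer vertex (which would produce a tangent arc with both endpoints interior, violating the cluster condition on the far endpoints); and the degenerate possibility of a tip approaching the incircle. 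Verifying that the blocking is captured exactly in all these cases, so that no rigid configuration other than a union of clusters and extremal arcs can survive, is the crux of the argument.
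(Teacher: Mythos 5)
Your setup (the dichotomy between vertices on the outer circle and arcs tangent to the incircle) and your first structural step (no two consecutive vertices can lie in the interior of the annulus, since then the Blaschke deformation at the first of them is admissible in both senses and Proposition~\ref{prop4} supplies an area-decreasing sense) coincide with the paper's argument. The gap is in your third paragraph, where you claim that ``a continuation of the same blocking argument'' drives the far endpoints $P_{m\pm2}$ onto the outer circle. It is not the same argument, and as stated it fails. In the problematic configuration ($P_m$ and $P_{m+2}$ interior, $P_{m-1},P_{m+1},P_{m+3}$ on the outer circle), the natural candidate deformations move the vertex $P_{m+1}$, which already sits \emph{on} the outer circle, so each of them is admissible in one sense only (inward): either $P_{m+1}$ moves along $\Gamma_m$ toward $P_{m-1}$, or along $\Gamma_{m+2}$ toward $P_{m+3}$. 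By Proposition~\ref{prop4}, the first decreases the area only when $j_{m+1}\le j_{m+2}$ and the second only when $j_{m+1}\le j_m$. Nothing in your analysis of signs of radial displacements excludes the possibility $j_{m+1}>\max(j_m,j_{m+2})$, in which case both admissible motions would \emph{increase} the area and this non-cluster configuration would be rigid in the sense of Definition~\ref{defrigid} --- exactly what you are trying to rule out.

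What is missing is the quantitative inequality $j_{m+1}\le\max(j_m,j_{m+2})$, which is the heart of the paper's proof and is stated there as a separate Lemma. It is genuinely not bookkeeping: writing $h_m$ and $h_{m+2}$ for the half-angles $P_{m+1}\widehat{O}P_{m-1}$ and $P_{m+3}\widehat{O}P_{m+1}$, one has $(1-r)\sin h_m=\sin(j_m/2)$, $(1-r)\sin h_{m+2}=\sin(j_{m+2}/2)$, and $j_{m+1}=h_m+h_{m+2}-\tfrac{1}{2}(j_m+j_{m+2})$, so the claim reduces to the inequality
$$
\arcsin\left(\frac{\sin(x/2)}{1-r}\right)+\arcsin\left(\frac{\sin(y/2)}{1-r}\right)-\frac{x+y}{2}\ \le\ \max(x,y),
\qquad x,y\in[0,\ell(r)],
$$
which the paper proves by a monotonicity argument that uses the constraint $(1-r)^2\le 1/3$, i.e.\ $r\ge 1-1/\sqrt{3}$, in an essential way. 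The decisive step is therefore a length comparison valid only on the admissible range of $r$, not a case-by-case check of which vertex blocks which direction; your proposal neither formulates nor proves it, and the concluding paragraph, which frames the remaining work as sign-checking of the obstructed directions, mislocates where the real difficulty lies.
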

\begin{proof}
Let us start with some elementary observations.
\begin{itemize}
\item No vertex can lie on the incircle.
\item When a vertex is on the outercircle, its corresponding arc is tangent to the incircle
and this arc goes over the tangent point on both sides. 
\item Conversely, when a vertex is in the interior of the annulus, its corresponding arc is not
tangent to the incircle.
\end{itemize}

Assume that the set $\Omega$ is rigid. 
First of all, let us prove that if a set has two consecutive vertexes, say $P_k$ and $P_{k+1}$
lying in the interior of the annulus, it cannot be rigid. Indeed, in such a case the two arcs
$\Gamma_k$ and $\Gamma_{k+1}$ are not tangent
and therefore the Blaschke deformation described in Definition
\ref{def-BD} is admissible in both senses ($\e>0$ or $\e<0$) without violating the annulus
constraint. Now, following Proposition \ref{prop4} we see that such a deformation will decrease the area by choosing $\e>0$ if $j_k\leq j_{k+1}$ or $\e<0$ if $j_k\geq j_{k+1}$.

Now let us consider a point $P_k$ lying in the interior of the annulus with its two
opposite points $P_{k-1}$ and $P_{k+1}$ on the outercircle. We want to prove
that these three points belong to a cluster, namely that $P_{k-2}$ and $P_{k+2}$ are
on the outercircle. Let us assume, for a contradiction, that $P_{k+2}$ is in the interior
of the annulus (it will obviously be the same proof with $P_{k-2}$). According to the beginning of the
proof, necessarily $P_{k+3}$ has to be on the outercircle. 

In that case, two particular admissible Blaschke deformations can be considered:
\begin{itemize}
\item Move $P_{k+1}$ on $\Gamma_k$ in the direct sense (in the direction of $P_{k-1}$).
\item Move $P_{k+1}$ on $\Gamma_{k+2}$ in the indirect sense (in the direction of $P_{k+3}$).
\end{itemize}
According to Proposition \ref{prop4}, the area will decrease for the first deformation
as soon as $j_{k+1} \leq j_{k+2}$, while it  will decrease for the second deformation
as soon as $j_{k+1} \leq j_{k}$. Therefore, we obtain the conclusion (this configuration
is not rigid) if we can prove $j_{k+1} \leq \max(j_{k},j_{k+2})$. This claim is proved in the next Lemma.
\end{proof}

\begin{lemma}
Assume that $P_k$ and $P_{k+2}$ lie in the interior of the annulus, and that $P_{k-1},P_{k+1},P_{k+3}$ lie on the outercircle. Then the lengths $j_k$, $j_{k+1}$, $j_{k+2}$ of the arcs $\Gamma_k$, $\Gamma_{k+1}$, $\Gamma_{k+2}$, satisfy $j_{k+1} \leq \max(j_{k},j_{k+2})$. 
\end{lemma}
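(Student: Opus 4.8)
The plan is to reduce the whole statement to a one–variable trigonometric inequality that, pleasantly, turns out to be equivalent to the annulus constraint itself. Write $R:=1-r$ for the outer radius and place the configuration so that the central vertex sits at $P_{k+1}=(R,0)$; since $P_{k+1}$ is on the outercircle, the opposite arc $\B_{k+1}$ is tangent to the incircle and passes over the tangency point $(-r,0)$ on both sides (this is exactly the elementary observation recalled at the beginning of the proof of Proposition \ref{propfund}). Parametrizing $\B_{k+1}$ by $[\alpha_{k+1},\beta_{k+1}]\ni t\mapsto P_{k+1}+e^{it}$, the tangency occurs at $t=\pi$, so I would set $a:=\beta_{k+1}-\pi>0$ and $c:=\pi-\alpha_{k+1}>0$; then $\alpha_k=a$, the endpoints are $P_k=(R-\cos a,-\sin a)$ and $P_{k+2}=(R-\cos c,\sin c)$, and crucially
\[
j_{k+1}=\beta_{k+1}-\alpha_{k+1}=a+c.
\]

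Next I would turn the remaining hypotheses into two constraints on $a$ and $c$. The assumption that $P_k$ and $P_{k+2}$ lie in the interior of the annulus reads $|P_k|^2=R^2-2R\cos a+1<R^2$ and similarly for $c$, that is
\[
\cos a\ \ge\ \tfrac1{2R},\qquad \cos c\ \ge\ \tfrac1{2R}.
\]
The assumption that the far endpoints $P_{k-1}=P_k+e^{i(\alpha_k+j_k)}$ and $P_{k+3}$ lie on the outercircle gives, after expanding $|P_{k-1}|^2=R^2$ and dividing by $4\sin(j_k/2)$, the relation $R\sin(a+j_k/2)=\sin(j_k/2)$, equivalently
\[
j_k=2\arctan\frac{R\sin a}{1-R\cos a}=:g(a),
\]
and symmetrically $j_{k+2}=g(c)$. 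A short computation shows that $g$ is increasing on the relevant range: its derivative has the sign of $\cos t-R$, which is positive since $\cos t\ge\tfrac1{2R}>R$ when $R\le 1/\sqrt3$.

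Finally I would prove the inequality. Because $g$ is increasing, $\max(j_k,j_{k+2})=g(m)$ with $m:=\max(a,c)$, while $j_{k+1}=a+c\le 2m$; hence it suffices to show $g(m)\ge 2m$. Using that $\arctan$ is increasing, $g(t)\ge 2t$ is equivalent to $\frac{R\sin t}{1-R\cos t}\ge\tan t$, and clearing the (positive) denominators this collapses to $2R\cos t\ge1$, i.e. precisely $\cos t\ge\tfrac1{2R}$ — which is exactly the interior condition found above. Therefore $j_{k+1}=a+c\le 2m\le g(m)=\max(j_k,j_{k+2})$, as claimed. The main obstacle is not any single estimate but the bookkeeping that produces the two identities $j_{k+1}=a+c$ and $j_\bullet=g(\cdot)$ from the vertex–position hypotheses; once everything is expressed in the angular variables $a,c$, the only remaining inequality $g(t)\ge 2t$ is not merely true but is \emph{equivalent} to the annulus membership of the interior vertices, so the bound is sharp and nothing is wasted.
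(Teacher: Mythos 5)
Your proof is correct, and it takes a genuinely different route from the paper's. The paper parametrizes the configuration by the central half-angles $h_k$, $h_{k+2}$ subtended at $O$ by the chords $P_{k-1}P_{k+1}$ and $P_{k+1}P_{k+3}$, derives $(1-r)\sin h_k=\sin(j_k/2)$ and the decomposition $j_{k+1}=h_k+h_{k+2}-\frac{j_k+j_{k+2}}{2}$, invokes the bound $j_k,j_{k+2}\le\ell(r)$ (a forward reference to Proposition \ref{prop-clu}), and is then left with the two-variable inequality \eqref{ineq1}, which it proves by a calculus argument: monotonicity in $y$, reduction to the diagonal $x=y$, then monotonicity along the diagonal. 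You instead parametrize by the angles $a,c$ at the vertex $P_{k+1}$, measured from the tangency point of $\Gamma_{k+1}$ with the incircle, which yields the exact identities $j_{k+1}=a+c$, $j_k=g(a)$, $j_{k+2}=g(c)$ with $g(t)=2\arctan\bigl(R\sin t/(1-R\cos t)\bigr)$, $R=1-r$. I checked the key computations: $|P_{k-1}|^2=R^2$ does reduce to $\sin(j_k/2)=R\sin(a+j_k/2)$ and hence to $j_k=g(a)$; the sign of $g'$ is that of $\cos t-R$, which is positive on the admissible range since $\frac{1}{2R}>R$ when $R\le 1/\sqrt{3}$; and $g(t)\ge 2t$ is, after clearing positive factors, equivalent to $2R\cos t\ge 1$, i.e. to the interior condition, so the chain $j_{k+1}=a+c\le 2\max(a,c)\le g(\max(a,c))=\max(j_k,j_{k+2})$ is sound. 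Your reduction buys economy and sharpness: a one-variable inequality instead of a two-variable one, no need for the a priori bound by $\ell(r)$ (so no forward reference), and a final inequality that is equivalent to, not merely implied by, the annulus constraint, so the equality cases are visible. What the paper's choice buys is coherence with the rest of the article: the parameters $h_k$ introduced there are exactly the characteristic parameters of clusters reused later in Proposition \ref{prop-clu} and Lemma \ref{lem-F}. One cosmetic remark: the interior hypothesis gives the strict inequalities $\cos a,\cos c>\frac{1}{2R}$ (you wrote $\ge$, which is weaker but all you use).
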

\begin{proof}
Let us introduce the two characteristic parameters $h_k$ and $h_{k+2}$ as half of the angles $P_{k+1}\widehat{O}P_{k-1}$ and $P_{k+3}\widehat{O}P_{k+1}$, see Fig. \ref{fig-lemma}.
\begin{figure}[h]                                             
\begin{center}                                                
{\includegraphics[height=4.7truecm] {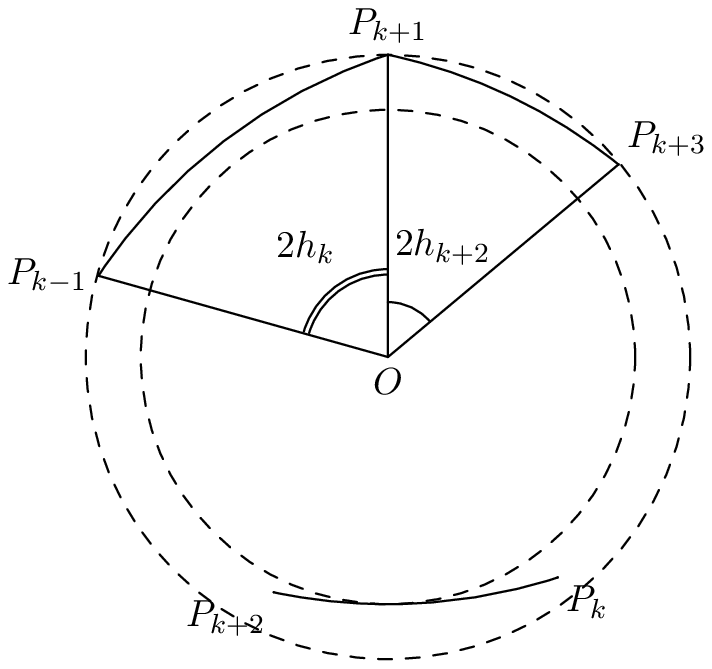}}\quad\quad \quad  {\includegraphics[height=4.7truecm] {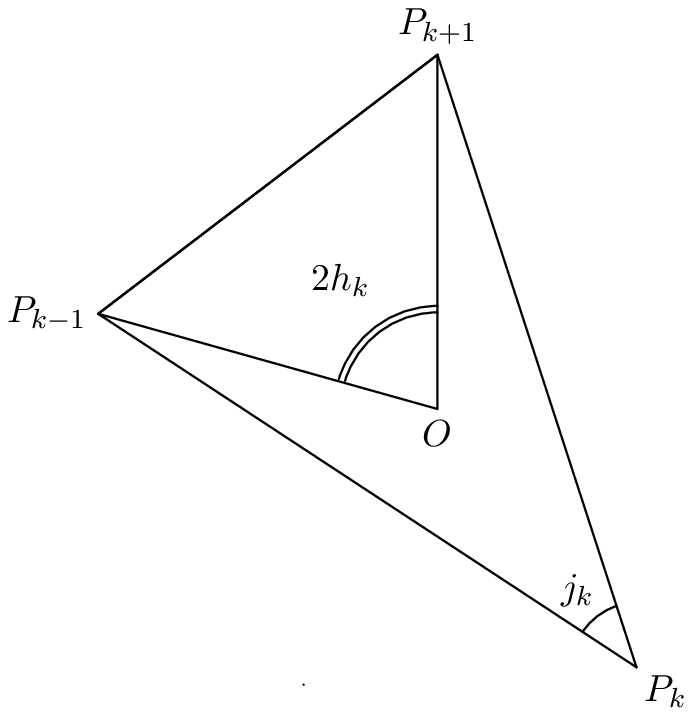}}                                             
\end{center}                                                  
\caption{{\it The parameters $h_{k}$ and $h_{k+2}$.}}\label{fig-lemma} 
\end{figure}                                                  
Elementary trigonometry provides the following relations with the corresponding lengths $j_k$ and $j_{k+2}$:
$$(1-r)\sin h_k=\sin(j_k/2), \quad (1-r)\sin h_{k+2}=\sin(j_{k+2}/2).$$
Now let us write the angle (or length) $j_{k+1}$ as
$$j_{k+1}=P_{k+2}\widehat{P_{k+1}}P_{k}=P_{k+2}\widehat{P_{k+1}}O+O\widehat{P_{k+1}}P_{k}.$$
In the triangles $P_{k+1}OP_{k+2}$ and $P_{k+1}OP_k$ we get the relations
$$P_{k+2}\widehat{P_{k+1}}O=h_{k+2}-\frac{j_{k+2}}{2}, \quad
O\widehat{P_{k+1}}P_{k}= h_{k}-\frac{j_{k}}{2}.$$
Therefore
$$j_{k+1}=h_k+h_{k+2} - \frac{j_k+j_{k+2}}{2}.$$
Now the lengths $j_k, j_{k+2}$ are less than the length of an extremal arc given by 
$\ell=2\arctan\left(\sqrt{4(1-r)^2 - 1}\right)$ (see Proposition \ref{prop-clu}).
We get the thesis if we can prove that for two positive numbers $x,y \in [0,\ell]$ and for $r\in [1-1/\sqrt{3},1/2]$, we have
\begin{equation}\label{ineq1}
\arcsin\left(\frac{\sin(x/2)}{1-r}\right) + \arcsin\left(\frac{\sin(y/2)}{1-r}\right)
- \frac{x+y}{2} \leq \max(x,y).
\end{equation}
Without loss of generality, by symmetry, we can assume $y\geq x$, so that the right-hand side in \eqref{ineq1} is $y$. Let us introduce the function
$$G(x,y):=\arcsin\left(\frac{\sin(x/2)}{1-r}\right) + \arcsin\left(\frac{\sin(y/2)}{1-r}\right)
- \frac{x+3y}{2} .$$
We have
$$\frac{\partial G}{\partial y} = \frac{1}{2(1-r)} 
\frac{\cos(y/2)}{\sqrt{1-\frac{\sin^2(y/2)}{(1-r)^2}}} - \frac{3}{2}.$$
Since the function
$$c\mapsto \frac{c}{\sqrt{1-\frac{1-c^2}{(1-r)^2}}}$$
is decreasing (its derivative has the sign of $1-1/(1-r)^2$), the maximum value of the
derivative
$\frac{\partial G}{\partial y}$ is obtained for $y=\ell$.
This implies
$$\frac{\partial G}{\partial y} \leq \frac{1}{2(1-r)} 
\frac{\cos(\ell/2)}{\sqrt{1-\frac{\sin^2(\ell/2)}{(1-r)^2}}} - \frac{3}{2} = \frac{1}{2-4(1-r)^2} - \frac{3}{2} \leq 0,
$$
where we have used the expression $\cos(\ell/2)=1/(2(1-r))$ and the bound $(1-r)^2 \leq 1/3$. Therefore, $y\mapsto G(x,y)$ is decreasing
and its maximum on the triangle $0\leq x\leq y \leq\ell$ is on the line $x=y$.
Exactly in the same way, it is immediate to check that $x\mapsto G(x,x)$ is decreasing, thus $G(x,y)\leq G(0,0)=0$, proving the lemma.
\end{proof}

\begin{example}\label{ex} Regular Reuleaux polygons are clearly rigid shapes. For a generic $r\notin \{\r\}_N$, many different rigid configurations can be constructed as we will see below. 
For example, there is one (up to rotations) rigid configuration with one
cluster, since in that case the parameter $h$ is fixed (see \eqref{uniqueh} below). When $r$ is large enough, we can
find a continuous family of rigid shapes with two clusters. They are
characterized by an arbitrary pair
of parameters $h_1,h_2$ such that their sum $h_1+h_2$ is fixed. And similarly
for rigid shapes with more clusters. Actually, as shown by Proposition \ref{prop-clu} below,
the lengths of arcs in a cluster are completely characterized by the parameter $h$, moreover, the constraint that the sum of all lengths is $\pi$ fixes the sum of these parameters.

\begin{figure}[h]                                             
\begin{center}
{\includegraphics[height=4.5truecm]{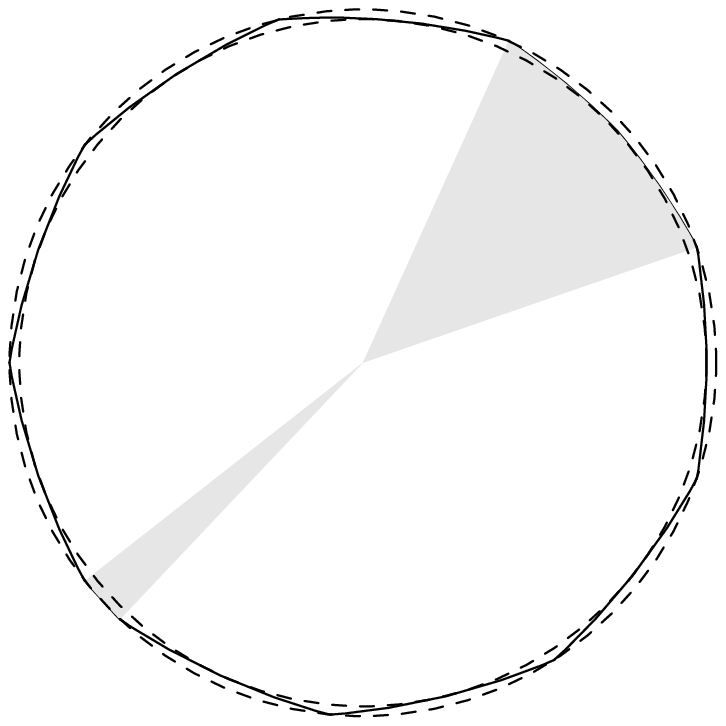}}\quad \quad {\includegraphics[height=4.5truecm]{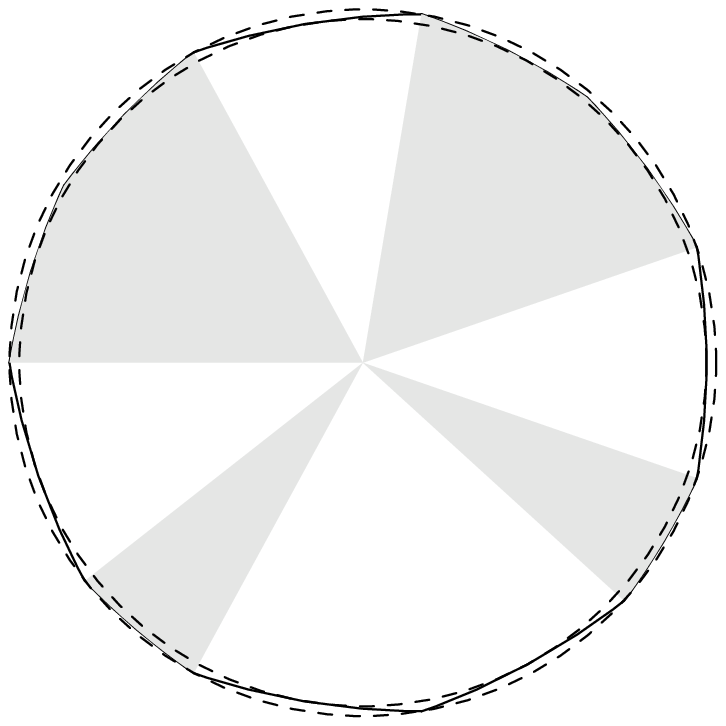}}
\end{center}                                                  
\caption{{\it Two rigid configurations for $r=0.493$. On the left, the one with a single cluster, on the right, one with two clusters.}}\label{fig-clu} 
\end{figure}  

\end{example}

In the next proposition we show that the length of an extremal arc is uniquely determined by $r$, whereas that of a cluster can be expressed as a function of $h$ (which is,  {on the other hand}, not uniquely determined by $r$, see also Example \ref{ex}).

\begin{proposition}\label{prop-clu} Let $r$ be fixed. The length of an extremal arc is
\begin{equation}\label{statement-l}
\ell(r):=2 \arctan\left(\sqrt{4(1-r)^2-1}\right).
\end{equation}
Let $(\B_{k-1}, \B_k, \B_{k+1})$ form a cluster of parameter $h$. Then the length of the arc $\B_k$ is
\begin{equation}\label{statement-a}
a(r,h):=2 \arcsin((1-r)\sin(h)),
\end{equation}
and the length of the opposite arcs $\B_{k\pm 1}$ is
\begin{equation}\label{statement-b}
b(r,h):=h + \frac{\ell(r)-a(r,h)}{2}.
\end{equation}
Moreover, $b(r,h)\geq a(r,h)$.
\end{proposition}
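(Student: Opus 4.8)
The plan is to reduce every length to elementary trigonometry of isoceles triangles built on the center $O$ and the vertexes $P_k$, using that every boundary arc has radius $1$, the incircle has radius $r$, and the outercircle radius $1-r$. First I would treat the extremal arc. Its endpoints $P_{k\pm1}$ lie on the outercircle and its center $P_k$ also lies on the outercircle (a tangent arc has its center at distance $1-r$ from $O$), so the triangle $OP_kP_{k+1}$ is isoceles with legs $|OP_k|=|OP_{k+1}|=1-r$ and base $|P_kP_{k+1}|=1$. By symmetry the half-arc subtends the angle $\ell/2$ at $P_k$, and dropping the altitude gives $\cos(\ell/2)=\tfrac{1}{2(1-r)}$, whence $\tan(\ell/2)=\sqrt{4(1-r)^2-1}$ and \eqref{statement-l}. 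I would record for later use that $\sin(\ell/2)=\sqrt{4(1-r)^2-1}/(2(1-r))$, which in particular yields $a(\ell)=\ell$.

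For the cluster I would first compute $a$. The vertexes $P_{k\pm1}$ lie on the outercircle and, by definition of $h$, subtend the angle $2h$ at $O$, so the chord has length $|P_{k+1}P_{k-1}|=2(1-r)\sin h$. Since $P_{k+1}$ and $P_{k-1}$ are the endpoints of the arc $\B_k$ centered at $P_k$, the triangle $P_kP_{k+1}P_{k-1}$ is isoceles with legs $1$ and apex angle equal to the central angle $a$ of $\B_k$; thus $\sin(a/2)=(1-r)\sin h$, giving \eqref{statement-a}. For $b=j_{k+1}$ I would split the central angle of $\B_{k+1}$ at its center $P_{k+1}$ by the ray $P_{k+1}\to O$, which points toward the tangency point on the incircle and, because a tangent arc runs past its tangency point, lies strictly between the radii $P_{k+1}\to P_{k+2}$ and $P_{k+1}\to P_k$; hence $b=\phi_1+\phi_2$. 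The part $\phi_1$ toward $P_{k+2}$ equals $\ell/2$ by the same isoceles triangle as for the extremal arc (here $P_{k+2}$ is on the outercircle). The part $\phi_2$ toward $P_k$ I would read off from the triangle $OP_{k+1}P_k$: the symmetry axis of the cluster runs through $O$ and $P_k$, so the angle at $P_k$ is half the apex angle, $a/2$, while the angle at $O$ is $\pi-h$ (one checks $(1-r)^2<1$ forces $P_k$ onto the axis on the side of $O$ opposite to $P_{k\pm1}$). The angle sum then gives $\phi_2=h-a/2$, so $b=\ell/2+h-a/2=h+(\ell-a)/2$, which is \eqref{statement-b}.

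Finally, for $b\ge a$ I would set $f(h):=b-a=h+\ell/2-\tfrac32 a(h)$ on the admissible range $h\in[0,\ell]$, whose endpoints are the two degenerate clusters of Remark~\ref{remark}; since $a(\ell)=\ell$ we have $f(\ell)=0$. The key is monotonicity: differentiating $a(h)=2\arcsin((1-r)\sin h)$ one finds that $a'$ is decreasing (its derivative carries the factor $(1-r)^2-1<0$), hence $f'=1-\tfrac32 a'$ is increasing; evaluating at the right endpoint, $f'(\ell)=6(1-r)^2-2\le 0$ thanks to the constraint $(1-r)^2\le 1/3$. An increasing $f'$ that is nonpositive at $\ell$ is nonpositive on all of $[0,\ell]$, so $f$ is nonincreasing and $f(h)\ge f(\ell)=0$. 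I expect this last inequality to be the only genuine obstacle: the one-line estimates $a\le 2h$ and $a\le\ell$ only deliver $b\ge a$ when $a\le\ell/2$, so the regime $a>\ell/2$ really requires the concavity/monotonicity of $a(h)$ together with the admissible bound $(1-r)^2\le 1/3$; everything else is bookkeeping of isoceles triangles, the only trap being the correct sign of the angle at $O$ in the triangle $OP_{k+1}P_k$.
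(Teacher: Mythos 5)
Your proof is correct, and it follows essentially the same route as the paper: the same isosceles triangles give $\ell$ and $a$ (the paper uses the law of sines in the triangle $OP_kP_{k+1}$ where you use the chord $P_{k+1}P_{k-1}$, a trivial difference), the same splitting $b=\ell/2+(h-a/2)$ at the vertex on the outercircle, and the same reduction of $b\ge a$ to the monotonicity of a one-variable function. One point deserves emphasis, because your handling of the last step is actually the \emph{repaired} version of the paper's. The paper studies $G(h)=\tfrac32 a(h)-h-\ell/2$ and asserts that, since $c\mapsto c/\sqrt{1-(1-r)^2(1-c^2)}$ is increasing, one gets $G'(h)\ge G'(0)=3(1-r)-1>0$ from $r\le 1/2$. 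But $G'(h)=\tfrac{3(1-r)\cos h}{\sqrt{1-(1-r)^2\sin^2 h}}-1$ is that increasing function of $c$ evaluated at $c=\cos h$, which \emph{decreases} in $h$; so the inequality goes the other way, $G'(h)\le G'(0)$, and the evaluation at $h=0$ proves nothing. The correct endpoint to test is $h=\ell$, where $G'(\ell)=2-6(1-r)^2\ge 0$ precisely because $(1-r)^2\le 1/3$, i.e. $r\ge 1-1/\sqrt{3}$. This is exactly what you do: with $f=-G$ you use concavity of $a$ (so $f'$ is increasing) and $f'(\ell)=6(1-r)^2-2\le 0$ to conclude $f$ is nonincreasing with $f(\ell)=0$, hence $f\ge 0$. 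Your observation that the binding constraint is the lower bound $r\ge 1-1/\sqrt{3}$ (and not $r\le 1/2$, which is what the paper's endpoint evaluation invokes) is the right diagnosis of where the difficulty sits.
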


\begin{proof}
Throughout the proof we omit the dependence on $r$ and $h$, which are fixed.

Let $\ell$ denote the length of an extremal arc $\B$ with opposite point $P$ and endpoints $Q$ and $R$. The triangle $POQ$ is isosceles, with base of length 1, legs of length $1-r$, and base angle $\ell/2$, see also Fig. \ref{fig-ellofr}. Therefore $\cos(\ell/2)=1/(2(1-r))$, which gives \eqref{statement-l}.
\begin{figure}[h]                                             
\begin{center}                                                
{\includegraphics[height=4.5truecm] {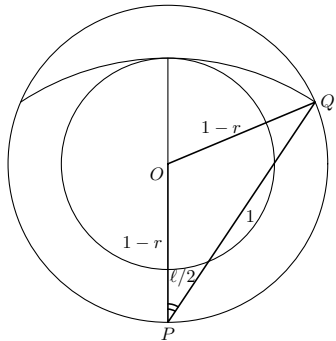}}                                                
\end{center}                                                  
\caption{{\it Computation of $\ell$.}}\label{fig-ellofr} 
\end{figure}                                                  

Let us now consider a cluster of parameter $h$. Without loss of generality, the involved vertexes are $P_1, \ldots, P_5$, oriented in such a way that the parameter $h$ is the angle between the vertical line through $O$ and the segment $OP_4$, see Fig. \ref{fig-cluster5}-left.
According to this notation, we have to determine the length $a$ of $\B_3$, and the length $b$ of $\B_4$ and $\B_2$. Let us consider the triangle $P_3OP_4$, see Fig. \ref{fig-cluster5}-right: the side $P_3P_4$ has length 1 and its opposite angle is $\pi-h$; similarly, the side $OP_4$ has length $1-r$ and its opposite angle is $a/2$; therefore $a$ is determined by the relation $\sin(a/2) = (1-r) \sin h$, which implies \eqref{statement-a}.
\begin{figure}[h]                                             
\begin{center}                                                
{\includegraphics[height=4.7truecm] {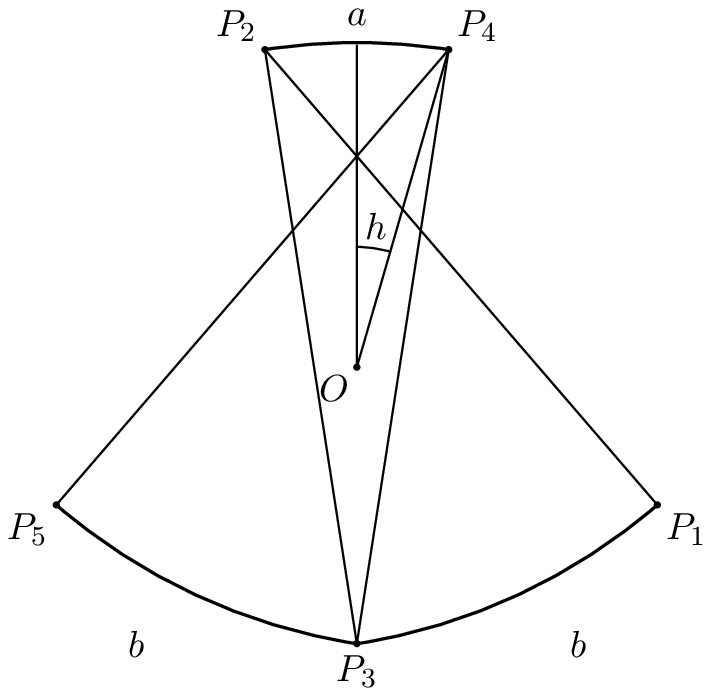}}\quad\quad \quad  {\includegraphics[height=4.7truecm] {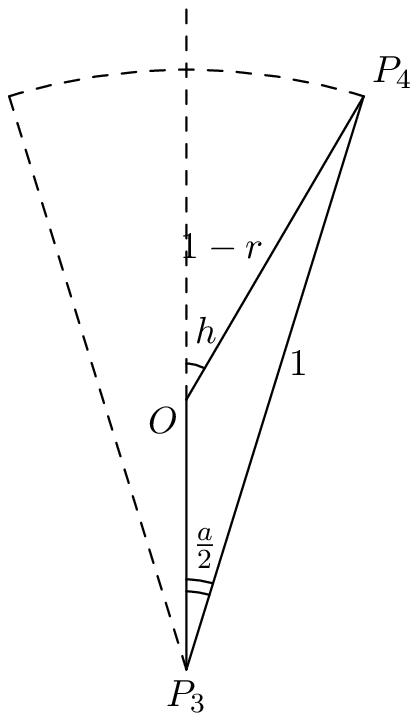}}                                             
\end{center}                                                  
\caption{{\it Left: cluster configuration under study. Right: computation of $a$.}}\label{fig-cluster5} 
\end{figure}                                                  
Let us now compute $b$. It is the sum of two angles: $O\widehat{P_4}P_5$ and $O\widehat{P_4}P_3$. The former is $\ell/2$, since it is the base angle of an isosceles triangle with basis 1 and legs $1-r$ (see also Fig. \ref{fig-ellofr}). The latter can be determined by difference and equals $h-a/2$ (see also Fig. \ref{fig-cluster5}-right). Summing up, we get \eqref{statement-b}.

\medskip
Finally to prove that $b> a$, we have to study the function 
$G: h\mapsto 3 a(h)/2 -h-\ell/2$ for $h\in (0,\ell)$ that are the possible values for
the parameter $h$. Its derivative is given by
$$G'(h)=\frac{3(1-r)\cos h}{\sqrt{1-(1-r)^2\sin^2h}} - 1.$$
Since the function $c\mapsto c/\sqrt{1-(1-r)^2(1-c^2)}$ is increasing (its derivative
has the sign of $1-(1-r)^2$), we see that $G'h)\geq G'(0)=3(1-r)-1>0$ since $r\leq 1/2$.
Thus, $G$ is increasing. Finally $G(\ell)=0$ because $\arcsin((1-r)\sin\ell)=\arcsin(\sin(\ell/2))=\ell/2$, therefore $G(h)< 0 \Leftrightarrow b(h) > a(h)$ for $h<\ell$.
\end{proof}

By definition and in view of the last proposition, the parameter associated to a cluster is between $0$ and $\ell(r)$. Another constraint comes from the fact that the perimeter of Reuleaux polygons of width 1 is $\pi$: given a rigid configuration of inradius $r$, $2N+1$ sides, and $m$ clusters of parameters $h_1, \ldots h_m \in (0,\ell(r))$, there holds
$$
\sum_{i=1}^m [a(r,h_i) + 2 b(r,h_i)] + (2N+1-3m)\ell(r)=\pi,
$$
where $a$ and $b$ are the functions defined above. Recalling the relation \eqref{statement-b}, we get
\begin{equation}\label{nc}
2 \sum_{i=1}^m h_i  + (2N+1-2m)\ell(r)= \pi.
\end{equation}

\begin{remark}\label{rem-cluster} 
The constraint \eqref{nc} can be written in a more general form, allowing the parameters $h_i$ to take also the values $0$ and $\ell(r)$. Indeed, as already noticed in Remark \ref{remark}, extremal arcs can be seen as degenerate cases of clusters: when $h=0$ the arc $\B_k$ reduces to a point whereas the two opposite sides $\B_{k-1}$ and $\B_k$ form a unique arc of length $\ell(r)$; when $h=\ell(r)$, the triple $(\B_{k-1},\B_k, \B_{k+1})$ is of extremal arcs. In both cases, the formulas above for $a$, $b$, and perimeter are still valid. Therefore, every rigid shape can be described in terms of a collection of parameters $h_i$, $i=1,\ldots, \tilde{m}$, varying in the closed interval $[0,\ell(r)]$. The necessary condition \eqref{nc} reads
\begin{equation}\label{nc2}
\sum_{i=1}^{\tilde m} [2 h_i + \ell(r)]= \pi.
\end{equation}
\end{remark}

In the remaining part of the section, we define a family of rigid shapes $\{\Omega(r)\}_r$, whose optimality for $\A(r)$ will be proven in the next section.
\begin{definition}\label{def-optimals} Let $r\in [1-1/\sqrt{3}, 1/2]$. We define
\begin{equation}\label{def-N}
N(r):=\left \lceil{  \frac{\pi}{2\ell(r)}-\frac12 }\right \rceil,
\end{equation}
where $\left \lceil{x}\right \rceil$ denotes the ceiling function of $x$, namely the least integer greater than or equal to $x$. This is the inverse of the function which associates to $r$ the unique $N\in \mathbb N^*$, such that $r\in (r_{2N-1}, r_{2N+1}]$.
We define $\Omega(r)$ as the regular $(2N(r)+1)$-gon if $r=r_{2N(r)+1}$, and as the unique rigid shape with $2N(r)+1$ sides and only one cluster. In this last case, the parameter $h$ associated to the cluster is uniquely determined by $r$, thus we may denote it by $h(r)$: in view of \eqref{nc}, it reads
\begin{equation}\label{uniqueh}
h(r):= \frac{\pi - (2N(r)-1)\ell(r)}{2}.
\end{equation}
\end{definition}

\section{Proof of Theorem \ref{thm} and Proposition \ref{prop2}}\label{sec-proof}

This section is devoted to the proofs of the main results. As announced in the Introduction, as a first step we address the problem $\A_N$, $N\in \mathbb N^*$, of area minimization restricted to the class of Reuleaux polygons with at most $2N+1$ sides. We will prove the following.

\begin{theorem}\label{thm-density} 
The area minimization problem restricted to the family of Reuleaux polygons with at most $2N+1$ sides, $N\in \mathbb N^*$, has the following solution:  {if $N<N(r)$, then there is no admissible shape for $\A_N(r)$; otherwise, if $N\geq N(r)$, the unique (up to rigid motion) minimizer of $\A_N(r)$ is $\Omega(r)$,}
where $N(r)$ and $\Omega(r)$ are the function and the shape introduced in Definition \ref{def-optimals}.
\end{theorem}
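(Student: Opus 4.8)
The plan is to first settle existence and reduce to rigid shapes, then run a concavity-based optimization. Whenever the admissible class of $\A_N(r)$ is nonempty, I would obtain a minimizer exactly as in Proposition \ref{prop1}: a minimizing sequence converges in the Hausdorff distance (Blaschke selection) to a convex body of constant width $1$ and inradius $r$, and since boundary arcs can only merge or vanish in the limit, the number of sides does not increase, so the limit is a Reuleaux polygon with at most $2N+1$ sides. This minimizer must be \emph{rigid}: were it not, by Definition \ref{defrigid} there would be an area-decreasing Blaschke deformation preserving the inradius, and since a small Blaschke deformation keeps the number of sides fixed (Definition \ref{def-BD}), it would remain in the admissible class and contradict minimality. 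By Proposition \ref{propfund} the minimizer then decomposes into $m$ clusters with parameters $h_1,\dots,h_m\in(0,\ell(r))$ and $s$ extremal arcs; writing $2N'+1:=3m+s\le 2N+1$ for its number of sides, the identity \eqref{nc} becomes $2\sum_i h_i+(2N'+1-2m)\ell(r)=\pi$.

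To rule out admissible shapes when $N<N(r)$, equivalently $r> r_{2N+1}$, I would argue by contradiction: if the class were nonempty, the rigid minimizer above would exist and, since each $h_i\le\ell(r)$, would satisfy $\pi=2\sum_i h_i+(2N'+1-2m)\ell(r)\le (2N'+1)\ell(r)$, i.e. $\ell(r)\ge \pi/(2N'+1)$. As $r\mapsto \ell(r)$ is decreasing with $\ell(r_{2N'+1})=\pi/(2N'+1)$ (Proposition \ref{prop-clu} together with the definition of $r_{2N'+1}$), this forces $r\le r_{2N'+1}\le r_{2N+1}$, contradicting $r>r_{2N+1}$. Hence the class is empty, which is the first alternative.

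The heart of the proof is the minimization among rigid shapes. I would first record the area decomposition: cutting the shape into the angular sectors subtended at the common center $O$ by the individual clusters and extremal arcs, its area is $\sum_{i=1}^m F(r,h_i)+s\,F(r,0)$, where $F(r,\cdot)$ is the sector area of \eqref{ap1}, with the degenerate conventions that $F(r,0)$ is the sector of a single extremal arc and $F(r,\ell(r))=3F(r,0)$ that of a full (triple) cluster. Then I would invoke the strict concavity of $h\mapsto F(r,h)$ (Proposition \ref{prop5}) through a mass-transfer step: given two clusters with interior parameters $h_i,h_j$, moving one to a boundary value while the other absorbs the difference --- sending $(h_i,h_j)\mapsto(0,h_i+h_j)$ when $h_i+h_j\le\ell(r)$, and $(h_i,h_j)\mapsto(\ell(r),h_i+h_j-\ell(r))$ otherwise --- preserves the perimeter identity \eqref{nc} and keeps the number of sides from increasing, while strictly lowering $F(r,h_i)+F(r,h_j)$ by strict concavity (the arguments are pushed apart). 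The parameter reaching $0$ turns its cluster into an extremal arc, the one reaching $\ell(r)$ into three extremal arcs. Iterating reduces the number of genuine clusters to at most one without raising the area.

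It remains to identify the reduced configuration. A rigid shape with a single cluster and $2N'+1$ sides has, from \eqref{nc} with $m=1$, the parameter $h=\tfrac12\big(\pi-(2N'-1)\ell(r)\big)$, and $h\in(0,\ell(r)]$ holds exactly when $r\in(r_{2N'-1},r_{2N'+1}]$, i.e. when $2N'+1=2N(r)+1$; the endpoint $h=\ell(r)$ (only for $r=r_{2N(r)+1}$) gives the regular polygon. This is precisely $\Omega(r)$ of Definition \ref{def-optimals} with parameter \eqref{uniqueh}, and since $2N(r)+1\le 2N+1$ it is admissible for $\A_N(r)$. Uniqueness up to rigid motion follows because the transfer step strictly decreases the area whenever two interior parameters are present, so every minimizer has at most one cluster, and the single-cluster shape is uniquely fixed by \eqref{uniqueh}. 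I expect the genuine difficulty to lie in this concavity step: one must verify the sector-area decomposition together with the degenerate identity $F(r,\ell(r))=3F(r,0)$ so that the transfer is consistent across the limits $h\to 0$ and $h\to\ell(r)$, and establish the strict concavity of $F(r,\cdot)$ (Proposition \ref{prop5}) on which the whole argument hinges.
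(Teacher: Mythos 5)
Your proposal is correct and follows essentially the same route as the paper: reduction to rigid shapes via admissible Blaschke deformations and Proposition \ref{propfund}, the sector-area formula $\sum_i F(r,h_i)$ of Lemma \ref{lem-F}, and the strict concavity of $h\mapsto F(r,h)$ (Proposition \ref{prop5}); your pairwise ``mass-transfer'' step is just the constructive version of the paper's observation that a strictly concave function on the polytope $\mathcal C$ (hypercube intersected with a hyperplane) attains its minimum only at extreme points, which have at most one coordinate in the open interval $(0,\ell(r))$. The only minor differences are that you prove emptiness for $N<N(r)$ by contradiction through a rigid minimizer (the paper bounds the perimeter of an arbitrary admissible polygon directly, needing no existence or rigidity), and that you make explicit a compactness/existence argument for $\A_N(r)$ that the paper passes over silently.
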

In order to prove Theorem \ref{thm-density}, we need to compute the area of a rigid shape. To this aim, we split a shape with $M$ sides into $M$ subdomains, by connecting with straight segments the origin to the vertexes. As previously, the origin is put at the center
of the minimal annulus. The elements of this partition can be regrouped as triples of subdomains associated to clusters and subdomains associated to extremal arcs.  {Examples of triples of subdomains associated with clusters are the gray regions in Fig. \ref{fig-clu} : on the left, 1 triple; on the right, 2 triples.} In the next lemma we provide a formula for the areas of these subdomains.

\begin{lemma}\label{lem-F} Let $r$ be the inradius. Then the area of a triple of subdomains associated to a cluster of parameter $h$ is 
\begin{equation}\label{ap1}
F(r,h):=(1-r)^2 \sin h \cos h +\frac{a-\sin a}{2}+(1-r) \big(\cos(a/2)-(1-r)\cos h\big) \sin(h+\ell) +b-\sin b,
\end{equation}
where $\ell=\ell(r)$, $a=a(r,h)$ and $b=b(r,h)$ are the functions introduced in \eqref{statement-l}, \eqref{statement-a}, and \eqref{statement-b}, respectively.
\end{lemma}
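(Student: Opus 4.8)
The plan is to partition the triple of subdomains associated with the cluster $(\Gamma_{k-1},\Gamma_k,\Gamma_{k+1})$ into the three curvilinear sectors obtained by joining the origin $O$ to the endpoints of each of the three arcs, and to compute the area of each sector separately. I fix coordinates so that the axis of symmetry of the cluster (see Remark \ref{remark}) is the vertical line through $O$, with the interior vertex $P_k$ below $O$ on this axis; by symmetry the two tangent side arcs $\Gamma_{k-1}$ and $\Gamma_{k+1}$ contribute equal areas, so the three sectors have disjoint interiors and their union is exactly the triple. The basic building block is the elementary fact that, since $\Omega$ is convex and $O$ is interior, each boundary arc bulges away from $O$: hence the sector bounded by an arc $\Gamma$ of length $s$ and the two radii joining $O$ to its endpoints $A,B$ has area equal to that of the rectilinear triangle $OAB$ plus the area $\tfrac12(s-\sin s)$ of the circular segment cut off by the chord $AB$ from the unit disk centred at the corresponding vertex.

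First I treat the middle arc $\Gamma_k$, of length $a$. Its endpoints $P_{k-1},P_{k+1}$ lie on the outercircle, both at distance $1-r$ from $O$, and by definition of the characteristic parameter $\angle P_{k-1}OP_{k+1}=2h$. Hence the rectilinear triangle $OP_{k-1}P_{k+1}$ has area $\tfrac12(1-r)^2\sin(2h)=(1-r)^2\sin h\cos h$, and adding the circular segment $\tfrac12(a-\sin a)$ produces the first two terms of \eqref{ap1}.

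Next I treat the two side arcs, each of length $b$, with endpoints $P_k$ (interior) and $P_{k\pm2}$ (on the outercircle). Two quantities are needed: the distance $|OP_k|$ and the angle $\angle P_kOP_{k\pm2}$. For the first, applying the law of sines in the triangle $OP_kP_{k+1}$ already used in Proposition \ref{prop-clu} gives $|OP_k|=\sin(h-a/2)/\sin h$, which, after expanding and using $\sin(a/2)=(1-r)\sin h$ from \eqref{statement-a}, simplifies to $|OP_k|=\cos(a/2)-(1-r)\cos h$. For the second, I use that $\angle P_kOP_{k+1}=\pi-h$ (Proposition \ref{prop-clu}) and that $\angle P_{k+1}OP_{k+2}=\pi-\ell$, the latter because $P_{k+1},P_{k+2}$ lie on the outercircle with chord length $|P_{k+1}P_{k+2}|=1$, so $\sin(\angle P_{k+1}OP_{k+2}/2)=1/(2(1-r))=\cos(\ell/2)$. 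Reading off the cyclic order $P_k,P_{k+1},P_{k+2}$ of the rays around $O$, the three angular gaps sum to $2\pi$, whence $\angle P_kOP_{k+2}=2\pi-(\pi-h)-(\pi-\ell)=h+\ell$. Each rectilinear triangle $OP_kP_{k\pm2}$ therefore has area $\tfrac12(1-r)\,|OP_k|\,\sin(h+\ell)$; summing the two such triangles and the two circular segments $\tfrac12(b-\sin b)$ gives exactly the remaining two terms of \eqref{ap1}, and adding all contributions yields $F(r,h)$.

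The only delicate point is the angular bookkeeping around $O$: because the interior vertex $P_k$ lies close to $O$ while $P_{k\pm1}$ and $P_{k\pm2}$ lie on the outercircle, the rays $OP_{k-2},\dots,OP_{k+2}$ occur in a non-monotone ``star'' order, so that $\angle P_kOP_{k+2}$ must be obtained as the complement to $2\pi$ of the two large angles $\pi-h$ and $\pi-\ell$, rather than as their difference. I expect this to be the main source of potential sign and orientation errors. The cleanest way to make the argument fully rigorous is to write down explicit coordinates for all five vertices and compute the triangle areas by the determinant formula; this simultaneously confirms the cyclic order and that each of the three arcs bulges outward, so that every circular segment enters with a plus sign.
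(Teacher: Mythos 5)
Your proof is correct and follows essentially the same route as the paper: it splits the triple into the three sectors obtained by joining $O$ to the vertices, and computes each sector as a rectilinear triangle plus a circular segment, with the triangle data $|OP_k|=\cos(a/2)-(1-r)\cos h$ and $\angle P_kOP_{k\pm2}=h+\ell$. The only difference is presentational: you derive these two quantities explicitly (law of sines in $OP_kP_{k+1}$, and the angular bookkeeping around $O$), whereas the paper asserts them from the figures, so your write-up is if anything slightly more complete on the point you correctly flag as delicate.
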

\begin{proof} The area $F(r,h)$ is the sum of two terms: $F(r,h)=|A|+2|B|$, where $A$ is the subdomain with boundary arc of length $a$ and $B$ is one of the two subdomains with boundary arc of length $b$ (which clearly have the same area).
Each of them can be furtherer decomposed as a triangle of the form $OP_jP_{j+2}$ and a portion of disk. 
For the subdomain $A$, the triangle is isosceles: the two sides which meet at $O$ have length $1-r$ and meet with an angle of $2h$, therefore the area is 
$$
(1-r)^2\sin h \cos h.
$$
The area of the remaining part can be computed by difference, as the area of the circular sector with vertexes $P_jP_{j+1}P_{j+2}$ and the triangle with the same vertexes. The result is
$$
\frac{a-\sin a}{2}.
$$
Let us now consider $B$. The triangle in $B$ (not isosceles) has the following structure: the two sides which meet at $O$ have length $1-r$ and $\cos(a/2) - (1-r)\cos h$, respectively, and form an angle of amplitude $\ell + h$; therefore its area is
$$
\frac12 (1-r) \big(\cos(a/2)-(1-r)\cos h\big) \sin(h+\ell).
$$
As already done for $A$, it is immediate to check that the remaining part in $B$ has area 
$$
\frac{b-\sin b}{2}.
$$
By summing up the contributions we find \eqref{ap1}. \end{proof}
\begin{remark}\label{maybe}
Notice that the formula above is valid also for $h= 0 $ or $\ell$, with the appropriate interpretation. As already noticed in Remarks \ref{remark} and \ref{rem-cluster}, when $h=0$, the cluster reduces to a single extremal arc. The formula above at 0 gives 
$$
F(r,0)=(1-r)^2\sin \ell \cos\ell + \frac{\ell - \sin \ell}{2};
$$
which is the area of the subdomain bounded by an extremal arc and the two segments joining its endpoints to the origin.
Similarly, when $h=\ell$ we have three extremal arcs, which is in accordance to 
$$F(r,\ell(r))=3F(r,0).$$
\end{remark}

The properties of $F$ are summarized in the following.
\begin{proposition}\label{prop5}
The first and second derivatives of $F$ with respect to the second variable are given by
\begin{align}
\frac{\partial F(r,h)}{\partial h} = &1+2(1-r)^2 \cos(2h)+2(1-r)\frac{\cos h}{\cos(a/2)}\,\left(2(1-r)^2\sin^2 h -1\right), \label{ap2}
\\
\frac{\partial^2 F(r,h)}{\partial h^2} = &  - 4 (1-r)^2 \sin(2h) + 2 (1-r)^5 \frac{\sin^3 h \cos^2 h}{\cos^3(a/2)}+\notag
\\
&  2 (1-r)\frac{\sin h}{\cos(a/2)} \left( 1-2(1-r)^2\sin^2 h + 3 (1-r)^2 \cos^2 h  \right),\label{ap3}
\end{align}
where $a=a(r,h)$ is the function introduced in \eqref{statement-a}. 
In particular, 
$$
\frac{\partial^2 F(r,h)}{\partial h^2}<0
$$
for any $h\in [0,\ell(r)]$, $\ell(r)$ being the function introduced in \eqref{statement-l}.
\end{proposition}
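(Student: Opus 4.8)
The three assertions are best handled separately: the two derivative formulas \eqref{ap2} and \eqref{ap3} come from direct differentiation, while the sign statement requires a genuine inequality in which the constraint $h\le\ell(r)$ is essential. For the derivative formulas I would first record the differentiation rules forced by Proposition \ref{prop-clu}. Since $\ell=\ell(r)$ is independent of $h$ and $\sin(a/2)=(1-r)\sin h$, differentiating the latter gives
\[
\frac{\partial a}{\partial h}=\frac{2(1-r)\cos h}{\cos(a/2)},\qquad \frac{\partial b}{\partial h}=1-\frac{(1-r)\cos h}{\cos(a/2)},
\]
the second because $b=h+(\ell-a)/2$. I would then differentiate $F$ in \eqref{ap1} term by term, using $1-\cos a=2(1-r)^2\sin^2 h$, $\sin a=2(1-r)\sin h\cos(a/2)$ and $\cos(a/2)=\sqrt{1-(1-r)^2\sin^2 h}$. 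The only delicate point is the summand $b-\sin b$, whose derivative $\tfrac{\partial b}{\partial h}(1-\cos b)$ depends explicitly on $b$; here I would expand $\cos b=\cos(h+\ell/2-a/2)$ and $\sin(h+\ell)$ by the addition formulas together with $\cos(\ell/2)=1/(2(1-r))$, and check that the $b$-dependent contributions cancel against the derivative of the third summand of $F$, leaving exactly \eqref{ap2}. Differentiating \eqref{ap2} once more (again via the rule for $\partial a/\partial h$) is purely mechanical and produces \eqref{ap3}.

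For the sign I would factor the common vanishing factor out of \eqref{ap3}: writing $s\coloneqq 1-r$ and $c\coloneqq\cos(a/2)=\sqrt{1-s^2\sin^2 h}$, every term contains $\sin h$, so
\[
\frac{\partial^2 F}{\partial h^2}=2s\sin h\,\Phi(h),\qquad \Phi(h)\coloneqq -4s\cos h+s^4\frac{\sin^2 h\cos^2 h}{c^3}+\frac{1-2s^2\sin^2 h+3s^2\cos^2 h}{c}.
\]
Since $2s\sin h\ge 0$ on $[0,\ell(r)]$, vanishing only at $h=0$, it suffices to show $\Phi<0$ on $(0,\ell(r)]$ (this is why the second derivative is $0$ at $h=0$ and strictly negative afterwards). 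Multiplying by $c^3>0$ and eliminating $\sin^2 h$ via $s^2\sin^2 h=1-c^2$, I expect the expression to collapse to a polynomial in $x\coloneqq s\cos h$ and $c$ that factors as
\[
c^3\Phi=(x-c)\big[\,2c^2(x-c)+(x+c)\,\big].
\]
Because $c^2=1-s^2+x^2>x^2$ with both quantities nonnegative, the factor $x-c$ is negative; hence $\Phi<0$ is equivalent to the positivity of the bracket, i.e.\ to $x(2c^2+1)>c(2c^2-1)$, where $2c^2-1=1-2s^2\sin^2 h>0$ since $s^2\le 1/3$.

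This last inequality is the crux, and it is exactly where $h\le\ell(r)$ enters (it fails, e.g., at $h=\pi/2$). Squaring the two positive sides and substituting $t\coloneqq\sin^2 h$, I would turn it into a quadratic inequality in $C\coloneqq c^2=1-s^2 t$: with $D\coloneqq 1-s^2\in[2/3,3/4]$ it reads $(8-4D)C^2-4DC-D>0$. The admissible range of $h$ confines $C$ to $[\,C_{\min},1\,]$ with $C_{\min}=\cos^2(a(r,\ell)/2)=1/(4s^2)$, obtained from $\cos(\ell/2)=1/(2s)$. The quadratic opens upward and has one negative and one positive root, so it is enough to verify that $C_{\min}$ exceeds the positive root; after clearing denominators and squaring once more, I expect this to reduce to the manifestly nonnegative identity $(2D^2-5D+2)^2\ge 0$, which is strict for $D\in[2/3,3/4]$. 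I anticipate the main obstacle to be precisely the bookkeeping that brings $\Phi$ into the factored form and then the verification that the resulting condition is this perfect square: both steps are elementary but error-prone, and the entire sign argument hinges on correctly encoding $h\le\ell(r)$ as the lower bound $C\ge 1/(4s^2)$.
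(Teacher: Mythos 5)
Your proposal is correct, and while your derivation of \eqref{ap2}--\eqref{ap3} coincides with the paper's (same differentiation rules $a'=2(1-r)\cos h/\cos(a/2)$, $b'=1-(1-r)\cos h/\cos(a/2)$, same trigonometric simplifications of $\cos a$, $\cos(a/2)$, $\cos b$), your sign argument takes a genuinely different and cleaner route. The paper also starts from $F''=2(1-r)\sin h\,\Phi$, but then splits $\Phi=A+B$ via the ad hoc decomposition $-4(1-r)\cos h=-\tfrac{1}{9}(1-r)\cos h-\tfrac{35}{9}(1-r)\cos h$ and proves $A\le0$ and $B<0$ through a chain of estimates: monotonicity of $A$ in $h$, the bound $a/2\le(1-r)h$, a third-derivative convexity analysis of an auxiliary function $B_2$, the inequality \eqref{ap7} proved by concavity of $\psi$, and sign checks of two quartic polynomials. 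You replace all of this by exact algebra. I verified your key identity: with $s=1-r$, $x=s\cos h$, $c=\cos(a/2)$, so that $s^2\sin^2 h=1-c^2$ and $s^2\cos^2 h=x^2$,
$$
c^3\Phi=-4xc^3+(1-c^2)x^2+c^2\left(2c^2-1+3x^2\right)=(x-c)\left[2c^2(x-c)+(x+c)\right],
$$
both sides being equal to $2c^2x^2-4c^3x+x^2+2c^4-c^2$. Since $c^2-x^2=1-s^2>0$, the factor $x-c$ is negative, so $\Phi<0$ reduces to $x(2c^2+1)>c(2c^2-1)$; both sides are positive on $(0,\ell(r)]$ (note $\ell(r)\le\pi/3$ and $2c^2-1=1-2s^2\sin^2h>0$), so squaring gives your quadratic inequality $q(C):=(8-4D)C^2-4DC-D>0$ with $C=c^2\in[1/(4s^2),1]$ and $D=1-s^2\in[2/3,3/4]$. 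Your endpoint value $C_{\min}=1/(4s^2)=1/(4(1-D))$ is right (at $h=\ell$ one has $a=\ell$ and $\cos^2(\ell/2)=1/(4s^2)$), and the conclusion holds: directly, $16(1-D)^2\,q\bigl(1/(4(1-D))\bigr)=4(2-D)(2D-1)^2>0$; or, following your route through the positive root, $(2D^2-3D+2)^2-8D(1-D)^2=(2D^2-5D+2)^2$, which is strictly positive since $2D^2-5D+2=(2D-1)(D-2)\ne0$ on $[2/3,3/4]$. What your approach buys: no numerical estimates, a computation checkable line by line, and full transparency about where the hypothesis $h\le\ell(r)$ enters (solely as the lower bound $C\ge1/(4s^2)$; indeed the inequality fails at $h=\pi/2$, as you observe). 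What the paper's approach buys is only that it does not require guessing the factorization. Finally, you are right to flag that both proofs actually give $F''(0)=0$ (every term of \eqref{ap3} carries a factor $\sin h$), so the strict inequality in the statement really holds on $(0,\ell(r)]$ only; this is a minor inaccuracy of the statement, not of either proof, and it does not affect the strict concavity of $h\mapsto F(r,h)$ on $[0,\ell(r)]$ that is used later.
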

\begin{proof}
Throughout the proof $r$ is fixed, therefore we omit the dependence on it. In particular, $F$, $a$, and $b$, introduced in \eqref{ap1}, \eqref{statement-a}, \eqref{statement-b}, respectively, will be regarded as functions of the sole variable $h$, and their derivatives will be denoted simply by a prime.

We will use the following formulae, which can be deduced from $\tan(\ell/2)=\sqrt{4(1-r)^2-1}$ (cf. \eqref{statement-l}):
\begin{equation}\label{ap4}
\cos(\ell/2)=\frac{1}{2(1-r)}, \quad \sin(\ell/2)=\frac{\sqrt{4(1-r)^2-1}}{2(1-r)},
\end{equation}
and
\begin{equation}\label{ap5}
\cos\ell=\frac{1}{2(1-r)^2} - 1,\quad \sin\ell=\frac{\sqrt{4(1-r)^2-1}}{2(1-r)^2}.
\end{equation}
From the definition of $a$ and $b$, we have
\begin{equation}\label{ap6}
a'=2(1-r) \frac{\cos h}{\cos(a/2)},\quad 
b'=1 - (1-r) \frac{\cos h}{\cos(a/2)}.
\end{equation}
Differentiating $F$ and using \eqref{ap6} yields:
$$\begin{array}{l}
F'(h)=(1-r)^2 \cos(2h) +(1-r) (1-\cos a)\frac{\cos h}{\cos(a/2)} +\\
\left((1-r)\sin h-(1-r)^2 \sin h \frac{\cos h}{\cos(a/2)}\right) (1-r) \sin(h+\ell) +\\
(\cos(a/2)-(1-r)\cos h) (1-r) \cos(h+\ell) +(1-\cos b) \left(1-(1-r) \frac{\cos h}{\cos(a/2)}\right).
\end{array}$$
Using \eqref{statement-a}, \eqref{statement-b}, and \eqref{ap4}, we obtain
$$
\cos a= 1- 2(1-r)^2\sin^2 h,\quad \cos(a/2)=\frac{1-(1-r)^2\sin^2 h}{\cos(a/2)},
$$
and
$$
\begin{array}{c}
\cos b=\cos h \cos\left(\frac{\ell-a}{2}\right)-\sin h \sin\left(\frac{\ell -a}{2}\right)=\\
\frac{\cos(a/2)}{2(1-r)}\left[\cos h-\sin h \sqrt{4(1-r)^2-1} \right]+
\frac{\sin h}{2}\left[\cos h \sqrt{4(1-r)^2-1}
+ \sin h\right].
\end{array}
$$
These computations allow to simplify the expression above of $F'$ and to get \eqref{ap2}.

\medskip
Differentiating one more time \eqref{ap2} we get
\begin{align*}
F''(h)=& -8 (1-r)^2 \cos h \sin h + 2(1-r)^3 \sin h \cos^2 h \left[ 2(1-r)^2 \sin^2 h - 1\right]/\cos^3(a/2) +
\\
& 2 (1-r)\sin h\left[ 1-2(1-r)^2\sin^2 h + 4 (1-r)^2 \cos^2 h \right]/\cos(a/2).
\end{align*}
Finally, writing $2(1-r)^2\sin^2h - 1 = (1-r)^2\sin^2 h - \cos^2(a/2)$ and reordering the terms, we arrive at \eqref{ap3}.

\medskip
Let us now prove that $F''(h)<0$ when $h\in [0,\ell]$. To this aim, we write the second derivative as $F''=2(1-r)\sin h (A+B)$, with
\begin{align*}
A(r,h):=& -\frac{1}{9} (1-r) \cos h  + \frac{(1-r)^4\sin^2 h \cos^2 h}{\cos^3(a/2)}
\\
B(r,h):=&  -\frac{35}{9} (1-r)\cos h  + \frac{3(1-r)^2\cos^2 h - 2 (1-r)^2\sin^2 h + 1}{\cos(a/2)}.
\end{align*}
If we prove that $A$ and $B$ are negative, we are done.

Since $h\mapsto a(h)$ is increasing, both terms in $A$ are increasing. Therefore
$A(r,h)\leq A(r,\ell)$. Using \eqref{ap4} and \eqref{ap5} we get
$$A(r,\ell)=-\frac{1}{9}(1-r)\left[\frac{1}{2(1-r)^2}-1\right]+8(1-r)^7
\frac{4(1-r)^2-1}{4(1-r)^4}
\left[\frac{1}{2(1-r)^2}-1\right]^2.$$
This leads to look at the sign of
the polynomial $P(x)=-4x^4+3x^2-\frac{5}{9}$, with $x:=1-r$. Since the roots of $P$ 
are $1/\sqrt{3}$ and $\sqrt{5/12}$, $P$ is negative in $[\frac{1}{2},\frac{1}{\sqrt{3}}]$, we conclude that $A(r,h)\leq 0$.

\medskip
Let us look at $B(r,h)$. It has the same sign of
$$B_1(r,h)=-\frac{35}{9} (1-r)\cos h \cos(a/2) + 1 +\frac{(1-r)^2}{2}(5\cos(2h) +1).$$
Now, comparing their $\sin$, it is immediate that, for any $h$, $a/2\leq (1-r)h$.
Therefore,
$$B_1(r,h)\leq B_2(r,h)= -\frac{35}{9} (1-r)\cos h \cos((1-r)h) + 1 +\frac{(1-r)^2}{2}(5\cos(2h) +1).$$
We now compute the three first derivatives of $h\mapsto B_2(r,h)$. It comes, after
linearisation
$$\frac{d^3 B_2}{dh^3}=(1-r)\left[-\frac{35}{18}\left(r^3\sin(rh)+(2-r)^3\sin((2-r)h)\right)
+20(1-r)\sin(2h)\right].$$
Using $\sin(rh)\leq rh$, $\sin((2-r)h)\leq (2-r)h$ and $\sin(2h)\geq 4h/\pi$
we get 
$$\frac{d^3 B_2}{d h^3}\geq (1-r)h\left[-\frac{35}{18}\left(r^4+(2-r)^4\right)
+\frac{80}{\pi}(1-r)\right].$$
Since $r^4+(2-r)^4\leq 56/9$ and $1-r\geq 1/2$ we conclude that 
$\frac{d^3 B_2}{dh^3} \geq 0$ and then $\frac{d B_2}{dh}$ is convex in $h$,
moreover it vanishes at $0$. Thus, $\frac{d B_2}{dh}$ is either always positive or always
negative or negative and then positive (and this is actually the case).
In any case, we see that
$$B_2(r,h)\leq \max\left(B_2(r,0),B_2(r,\ell)\right).$$
Now we see that $B_2(r,0)=3(1-r)^2-\frac{35}{9}(1-r)+1 \leq -\frac{7}{36} <0$.

It remains to estimate $B_2(r,\ell)$. For that purpose,  we claim the following:
\begin{equation}\label{ap7}
\cos((1-r)\ell) \geq \frac{11}{5} - \frac{12}{5}(1-r) = \frac{12}{5} r - \frac{1}{5}.
\end{equation}
Recalling the relation \eqref{statement-l} between $\ell$ and $r$, the validity of \eqref{ap7} is related to the positivity of the auxiliary function
$$\psi(r):=\arccos\left(\frac{12}{5} r - \frac{1}{5}\right)-
2(1-r)\arctan\left(\sqrt{4(1-r)^2-1}\right).$$
The second derivative of $\psi$ reads
$$
\psi''(r)=-\frac{144(12 r-1)}{\left( 25-(12 r -1)^2\right)^{3/2}} + \frac{2}{(1-r)(4(1-r)^2-1)^{3/2}}
$$
and is negative in $[1-1/\sqrt{3},1/2]$. In particular $\psi$ is concave and 
$$\psi(r)\geq \min (\psi(1-1/\sqrt{3}),\psi(1/2))\geq 0.$$
This proves the claim.

We insert the estimate \eqref{ap7} in $B_2$ to get (we still use $x=1-r$):
$$B_2(r,\ell) \leq -\frac{7}{9}x\left(\frac{1}{2x^2}-1\right)(11-12x)+1+\frac{x^2}{2}
\left( 5\left(\frac{1}{2x^2}-1\right)^2-5\frac{4x^2-1}{4x^4} +1\right).$$
This leads to consider the polynomial
$$Q(x)=-\frac{19}{3}x^4+\frac{77}{9}\,x^3+\frac{2}{3}\,x^2-\frac{77}{18}\,x+\frac{5}{4}.$$
This polynomial is negative in $[1/2,1/\sqrt{3}]$, implying that $B_2$ is negative too.
\end{proof}

\begin{proof}[Proof of Theorem \ref{thm-density}.] We begin by noticing that if $N<N(r)$, then the class of admissible shapes is empty: assume by contradiction that there exists a Reuleaux polygon contained into the annulus $B_{1-r}(0)\setminus \overline{B}_r(0)$ with $M<2N(r)+1$ sides. Each arc of the boundary has length at most $\ell(r)$, therefore, imposing that the perimeter is $\pi$ and recalling the definition \eqref{def-N} of $N(r)$, we get
$$
\pi \leq M \ell(r) \leq (2N(r)-1) \ell(r)= \left(2 \left \lceil\frac{\pi}{2\ell(r)}-\frac12 \right \rceil-1 \right) \ell(r) < \pi,
$$ 
which is absurd.

Let now $N\geq N(r)$. The proof is divided into four steps.
\medskip

\textit{Step 1.} By Definition \ref{defrigid}, any shape that is not rigid can be modified, through an admissible
Blaschke deformation, to decrease the area. Thus, it remains to minimize the area
among rigid shapes. We have seen in Proposition \ref{propfund} that these rigid shapes
are composed of extremal arcs and clusters.

\medskip

\textit{Step 2.} Let us write an area formula for a rigid shape $\Omega$. Connecting with straight segments the origin to the vertexes, we split $\Omega$ into subdomains, which can be regrouped as triples of subdomains associated to clusters  {(see also Fig. \ref{fig-clu})} and subdomains associated to extremal arcs. According to the notation used in Remark \ref{rem-cluster}, all the subdomains can be regarded associated to clusters, allowing the parameters $h_i$ to vary in the closed interval $[0,\ell(r)]$, $i=1, \ldots, \tilde{m}$, for a suitable $\tilde{m}\in \mathbb N$. In view of Lemma \ref{lem-F} and Remark \ref{maybe}, we infer that the total area is
$$
|\Omega|=\sum_{i=1}^{\tilde{m}}F(r, h_i).
$$
Notice that the area does not explicitly depend on the relative position of the clusters (this dependence is enclosed into the relation among the lengths).
The necessary condition \eqref{nc2} gives a restriction on the possible values of $\tilde{m}$: since every $h_i$ is between $0$ and $\ell(r)$, we infer that $2h_i + \ell(r) \in [\ell(r), 3 \ell(r)]$, so that, summing over $i$ from $1$ to $\tilde{m}$, we get
\begin{equation}\label{range}
\frac{\pi}{3\ell(r)}\leq \tilde{m} \leq  \left \lfloor\frac{\pi}{\ell(r)}\right \rfloor.
\end{equation}
In particular, this implies that the number of sides of a rigid configuration cannot be arbitrarily large, but it is bounded by a quantity depending only on $r$. We infer that the sequence of minima $\{\A_N(r)\}_{N\geq N(r)}$ is constant after a finite number of values (depending on $r$). A priori, the first terms of the sequence could be different. In the next step we show that, actually, the sequence is constant in $N$.

\textit{Step 3.} Let us optimize the area when $\tilde{m}$ is fixed. In view of the previous step, we are led to minimize the function 
$$
\mathcal F(h_1,\ldots, h_{\tilde{m}}):=\sum_{i=1}^{\tilde{m}} F(r,h_i),
$$
over the set 
$$
\mathcal C:=\left\{ (h_1,\ldots, h_{\tilde{m}})\in [0,\ell(r)]^{\tilde{m}},\quad \sum_{i=1}^{\tilde{m}} [2h_i + \ell(r)] = \pi\right\}.
$$
In that way, we transform a geometric problem into an analytic one which might have
solutions that do not correspond to real geometric shapes. It turns out, as we will see
below, that the minimizer is unique and actually corresponds to a real body of constant width.

The set of constraints is the intersection of an hypercube and an hyperplane. In view of Proposition \ref{prop5}, the function $\mathcal F$ is strictly concave, therefore it attains a minimum on extremal points of $\mathcal C$. The extremal points of $\mathcal C$ lie on the edges of the hypercube, namely (up to relabeling) $h_1\in [0,\ell]$ and $h_2,\ldots, h_{\tilde{m}}\in \{0,\ell\}$. Without loss of generality, we may label the $h_i$s in such a way that $h_2,\ldots, h_q=0$ and $h_{q+1}, \ldots, h_{\tilde{m}}=\ell$. We claim the following facts:
\begin{itemize}
\item[(i)] for $\tilde{m}$ fixed, the extremal point of $\mathcal C$ is unique;
\item[(ii)] for a fixed $r$, the minimum of $\mathcal F$ does not depend on $\tilde{m}$ in the range \eqref{range}.
\end{itemize}
The case in which $r$ is the inradius of some regular Reuleaux polygon is trivial: in view of \eqref{nc2}, the parameter $h_1$ has to belong to $\{0,\ell(r)\}$ and, again by \eqref{nc2}, no matter how the sides are regrouped (one by one when $h=0$, three by three when $h=\ell(r)$), they are necessarily $2N(r)+1$, where $N(r)$ is the number introduced in \eqref{def-N}.

In all the other cases, $h_1$ lies necessary between $0$ and $\ell(r)$, strictly. A first consequence is that the number of sides is $3 + (q-1)+3(\tilde{m}-q)$. Since it is odd, we infer that $\tilde{m}$ is odd, too. In view of \eqref{nc2}, $q$ is given by
$$
q = \frac32 \tilde{m} - \frac{\pi}{2\ell(r)}+ \frac{h_1}{\ell(r)}.
$$
More precisely, taking into account that $h_1/\ell(r) \in (0,1)$, we get 
$$
q=q(\tilde{m}):=1+\left\lfloor \frac32 \tilde{m} - \frac{\pi}{2\ell(r)} \right\rfloor.
$$
Using again \eqref{nc2}, we infer that $h_1$ is given by
$$
h_1=\ell(r)(1-\delta),
$$
with
$$
\delta:= \frac32 \tilde{m} - \frac{\pi}{2\ell(r)} - \left\lfloor \frac32 \tilde{m} - \frac{\pi}{2\ell(r)}\right\rfloor \in (0,1).
$$
All in all, once fixed $\tilde{m}$, $h_1$ and $q$ are determined. This concludes the proof of (i).

Notice that if we replace $\tilde{m}$ by $\tilde{m}+2$ (as already noticed $\tilde{m}$ has to be odd), the value of $\delta$ does not change. This allows us to write $h_1$, without the dependence on $\tilde{m}$. Therefore, in order to prove (ii), it is enough to show that the number of sides of length $\ell(r)$ does not depend on $\tilde{m}$:
\begin{align*}
q& (\tilde{m})-1+ 3 (\tilde{m}-q(\tilde{m})) = 3 \tilde{m} - 2 q(\tilde{m})-1  = 3 \tilde{m} - 2 \left\lfloor \frac32 \tilde{m} - \frac{\pi}{2\ell(r)} \right\rfloor - 3 
\\ & = 3 (\tilde{m}-1) - 2 \left\lfloor \frac32 (\tilde{m}-1) - \left(\frac{\pi}{2\ell(r)} -  \frac32 \right) \right\rfloor = 2 \left( \left\lfloor  \frac{\pi}{2\ell(r)} -  \frac32 \right\rfloor+ 1\right). 
\end{align*}
Here we have used that $\tilde{m}-1$ is even, together with the equality $\left\lfloor k - x\right\rfloor= k -\left\lfloor x \right\rfloor - 1$, true for every $k\in \mathbb N$ and every $0<x<k$, $x\notin \mathbb N$. This proves (ii).

\medskip

\textit{Step 4.} In view of the previous step, we immediately get that the optimal shape associated to the inradius $r$ of a regular Reuleaux polygon, is the Reuleaux polygon itself, for every $N\geq N(r)$. When $r$ is not the inradius of a regular Reuleaux polygon, we have shown that, for every $N\geq N(r)$, the optimal configuration has a unique cluster and have all the other sides of length $\ell(r)$. As already underlined in Definition \ref{def-optimals}, these properties characterize the set $ \Omega(r)$, and the proof of the theorem is concluded. Note that $2 \left\lfloor \pi/(2\ell(r))- 3/2\right\rfloor+2$ (i.e. the number of sides of length $\ell(r)$ found in Step 3) is equal to $N(r)-2$, implying that the total number of sides of the optimal shape is $2N(r)+1$, as expected.
\end{proof}

We are now in a position to prove the main results, about the characterization of minimizers and the continuity of minima and minimizers with respect to $r$.
\begin{proof}[Proof of Theorem \ref{thm}]
In view of the density of the Reuleaux polygons in the class of constant width sets see \cite{BF} or \cite{Buc}, we infer that
$$
\A(r)=\inf_{ {N\geq 1}} \A_N(r).
$$
In view of Theorem \ref{thm-density}, we infer that the sequence $\A_N(r)$ is finite and constant after $N(r)$, so that $\A(r)=\inf_{ {N\geq 1}}\A_N(r)=|\Omega(r)|$. The other statements follow from the characterization of $\Omega(r)$ (see Definition \ref{def-optimals} and Proposition \ref{prop-clu}). 
\end{proof}

\begin{proof}[Proof of Proposition \ref{prop2}.] In view of Theorem \ref{thm}, its proof, and Definition \ref{def-optimals}, $\A(r)$ can be computed by dividing the optimal shape $\Omega(r)$ into $2N(r)+1$ subdomains, obtained by joining with segments the vertexes with the origin. The partition is made of subdomains associated to extremal arcs of length $\ell(r)$ and (possibly) to one triple associated to the cluster of parameter $h(r)$. According to \eqref{ap1}, the former have all area $F(r,0)$, the latter (when present) has area  $F(r,h(r))$. 
When $r=\r$ the partition is regular and
$$
\A(\r) = (2N+1)F(\r,0).
$$
In all the other cases, namely when $r\in (r_{_{2N-1}},\r)$, we have
$$
\A(r)=(2N-2)F(r,0) + F(r,h(r)).
$$
In the open interval $(r_{_{2N-1}}, \r)$ the functions $F(r,0)$, $h(r)$, and $F(r,h(r))$ are continuous, therefore $\A(r)$ is continuous too. In the limit as $r \searrow r_{_{2N-1}}$, we have $h(r)\to 0$, so that 
$$
\lim_{r\searrow r_{_{2N-1}}}\A(r)= (2N-2)F(r_{_{2N-1}},0) + F(r_{_{2N-1}},0) = \A(r_{_{2N-1}});
$$
similarly, when $r\nearrow \r$, we have $h(r)\to\ell(\r)$ and $F(r,\ell(r))\to 3 F(\r,0)$, thus 
$$
\lim_{r\nearrow \r}\A(r)=(2N-2)F(\r,0) + 3F(\r,0)=(2N+1)F(\r,0) = \A(\r).
$$
Therefore, $\A$ is continuous in each closed interval $[r_{_{2N-1}},\r]$. This concludes the proof of the continuity of $\A$.

Let us now consider the optimal shapes. We choose the following orientation: for regular Reuleaux polygons, we take one of the vertexes aligned vertically with the origin, above it; in all the other cases, we choose the point $P_k$ of the cluster (see Definition \ref{def-clu}) aligned vertically with the origin, below it (see also Fig. \ref{fig-os}). By construction, the position of the vertexes varies continuously with respect to $r$, so that the optimal shapes vary continuously with respect to the Hausdorff convergence. 
\end{proof}

 {\begin{remark}
We could also be interested in {\it the dual problem}, namely to {\it maximize} the inradius among shapes of constant width and fixed area. If we check that the function $r\mapsto \A(r)$ is strictly increasing
(what is clearly true numerically), then an easy argument shows that the domain $\Omega(r)$ also
solves this dual problem.
\end{remark}}

\bigskip

{\bf Acknowledgements}: 
The authors want to thank G\'erard Philippin for stimulating discussions  {and the two anonymous referees
for their very interesting suggestions}.
This work was partially supported by the project ANR-18-CE40-0013 SHAPO financed by the French Agence Nationale de la Recherche (ANR).
IL acknowledges the Dipartimento di Matematica - Universit\`a di Pisa for the hospitality.

\medskip

Antoine \textsc{Henrot}, Universit\'e de Lorraine CNRS, IECL, F-54000 Nancy, France email: \texttt{antoine.henrot@univ-lorraine.fr} 

Ilaria \textsc{Lucardesi}, Universit\'e de Lorraine CNRS, IECL, F-54000 Nancy, France email: \texttt{ilaria.lucardesi@univ-lorraine.fr}

\end{document}